\newcommand{\mF}{\mathcal{F}}
\newcommand{\R}{\mathbb{R}}
\newcommand{\mP}{\mathbb{P}}
\newcommand{\mE}{\mathbb{E}}
\newcommand{\E}{\mathbb{E}}
\newcommand{\md}{\,{\rm d}}
\newcommand{\s}{\sum\limits}
\newcommand{\one}{{\mathbbm{1}}}
\newcommand{\dW}{{\md W}}
\newcommand{\dB}{{\md B}}
\newcommand{\du}{{\md u}}
\newcommand{\dv}{{\md v}}
\newcommand{\ds}{{\md s}}
\newcommand{\dtildet}{{\md \tilde{t}}}
\newcommand{\exptauzeta}{\exp\left(-U_{\tau_\zeta^{x-l}}\right)}
\newcommand{\eexptauzetax}{\mE_r\left[\exp\left(-U_{\tau_\zeta^{x}}\right)\right]}
\newcommand{\tauzeta}{\tau_\zeta}
\newcommand{\Ito}{{It\^{o}}}
\theoremstyle{break}
\newtheorem{Def}{Definition}[section]
\newtheorem{Remark}[Def]{Remark}
\newtheorem{Lemma}[Def]{Lemma}
\newtheorem{Theorem}[Def]{Theorem}
\newtheorem{Example}[Def]{Example}
\newtheorem{Assumption}[Def]{Assumption}
\newenvironment{proof}{\noindent{\textit{Proof:}}}{%
\unskip\nobreak\hfil\penalty50\hskip1em\null\nobreak
$\Box$
\parfillskip=\z@\finalhyphendemerits=0\endgraf\bigskip}
\let\oldendBsp\endBsp
\def\endBsp{\unskip\nobreak\hfil\penalty50\hskip1em\null\nobreak\hfil%
$\blacksquare$\parfillskip=\z@\finalhyphendemerits=0\endgraf\oldendBsp}
\let\oldendBem\endBem
\def\endBem{\unskip\nobreak\hfil\penalty50\hskip1em\null\nobreak\hfil%
$\blacksquare$\parfillskip=\z@\finalhyphendemerits=0\endgraf\oldendBem}
\date{July 2021}
\title{Two Approaches for a Dividend Maximization Problem under an Ornstein-Uhlenbeck Interest Rate}
\author{ Julia Eisenberg$^1$ \hspace{1.0em} Stefan Kremsner$^2$   \hspace{1.0em}   Alexander Steinicke$^3$   \hspace{1.0em}   }
\begin{document}
\maketitle
\begin{abstract}\noindent
We investigate a dividend maximization problem under stochastic interest rates with Ornstein-Uhlenbeck dynamics. This setup also takes negative rates into account.
First a deterministic time is considered, where an explicit separating curve $\alpha(t)$ can be found to determine the optimal strategy at time $t$. In a second setting we introduce
a strategy-independent stopping time. The properties and behavior of these optimal control problems in both settings are analyzed in an analytical HJB-driven approach as well as using backward stochastic differential equations.
\end{abstract}

\vspace{6pt}
\noindent
\\{\bf Key words:} optimal control, dividends, stochastic interest rate, Hamilton--Jacobi--Bellman equation, HJB, finite time horizon, backward stochastic differential equation, BSDE
\settowidth\labelwidth{{\it 2010 Mathematical Subject Classification: }}%
                \par\noindent {\it 2010 Mathematical Subject Classification: }%
                \rlap{Primary}\phantom{Secondary}
                93E20\newline\null\hskip\labelwidth
                Secondary 49L20, 60H30

{\noindent
	\footnotetext[1]{Institute of Mathematical Methods in Economics, Vienna University of Technology, Wiedner Hauptstra\ss e 8, 1040 Wien, Austria. \hspace*{1.5em}  {\tt  jeisenbe{\rm@}fam.tuwien.ac.at} \Envelope}
	\footnotetext[2]{Department of Mathematics, University of Graz, Heinrichstra\ss{}e 36, 8010 Graz, Austria. \hspace*{1.5em}
		{\tt stefan.kremsner{\rm@}uni-graz.at}}
	\footnotetext[3]{ Department of Mathematics and Information Technology, Montanuniversit�t Leoben, Peter Tunner-Stra\ss e 25/I, 8700 Leoben, Austria. \hspace*{1.5em}  {\tt  alexander.steinicke{\rm@}unileoben.ac.at}}}

\section{Introduction}
Maximizing expected discounted dividends is a popular and well investigated topic in insurance mathematics, whose roots go back to the work of Bruno De Finetti in 1957 \cite{definetti}. Modeling the surplus of a company by a Brownian motion with drift, Cramer-Lundberg or a general L\'evy process, putting constraints in form of value at risk, time inconsistent preferences or incorporating a random funding -- these problems have been studied, for instance, in \cite{asmussen}, \cite{azcue}, \cite{strini}, \cite{hailiang}. A general overview of the existing literature can be found in \cite{altho}, \cite{avanzi}, \cite{schmidli}. 

In several cases, e.g.\ in \cite{asmussen}, \cite{azcue}, \cite{JiaPi}, the optimal dividend strategy was proved to be a constant barrier or a band strategy. However, in some settings the optimal strategy could not be determined or even be shown to exist. Then, typically, a viscosity solution approach has to be used instead. 

An important feature of almost every setting is that one considers the value of expected discounted dividends up to some time of ruin due to the chosen dividend strategy. The dependence of the time horizon on the strategy makes the solution of the problem quite complicated, particularly in the presence of a stochastic interest rate. Several papers consider the setting with a stochastic interest rate, e.g.\ \cite{soner}, \cite {JiaPi}, \cite{tiziano}. 
\\For instance, letting the interest rate be given by a mean-reverting Ornstein-Uhlenbeck (Vasicek model) process leads to a two dimensional control problem, which cannot be easily solved via the corresponding Hamilton--Jacobi--Bellman equation. The optimal solution seems to be of barrier type where the barrier is given by a highly non-linear function depending on the interest rate.
Therefore, it is hard to explicitly solve such a problem or even calculate the return function corresponding to a non-linear barrier. An attempt to tackle this problem was done for instance in \cite{eisenberg}. However, the value function could not be shown to be sufficiently smooth, and a viscosity solution approach was applied. 

In the present paper, we consider the dividend maximization problem in a different setup. We modify the usual setting and consider two different model scenarios. Under the assumption of an Ornstein-Uhlenbeck interest rate and Brownian surplus, the ruin time of an ex-dividend surplus process is neglected. Instead, we introduce a finite deterministic time horizon in the first model, and a strategy-independent but surplus-dependent stochastic time horizon in the second model. In both models, one faces a 3-dimensional control problem. We demonstrate two solution approaches: solving the corresponding Hamilton--Jacobi--Bellman (HJB) equation and a backward stochastic differential equation (BSDE) approach. Whilst in the first model, we are able to calculate the value function and the optimal strategy explicitly via the HJB approach, in the second model only the BSDE approach allows us to show that the value function is smooth enough. 
\\Both approaches, applied in a parallel way, demonstrate their advantages and disadvantages. If one is able to calculate a return function that is sufficiently smooth and solves the HJB equation, the HJB approach leads to stronger results than the BSDE approach. However, the calculation of a candidate return function can be extremely time and space-consuming. Instead, the BSDE approach allows to calculate the value function and the optimal strategy numerically. 

BSDEs were first introduced by Bismut \cite{bismut1978introductory} and later extensively studied in non-linear form by Pardoux and Peng \cite{pardoux1990adapted}. The concept of BSDEs has proved itself very useful, especially in the context of finance and stochastic optimal control, see for example 
\cite{el1997backward, ma1999forward, kohlmann2000relationship, pham, kremsner2020deep}. Moreover for high-dimensional problems, there exist numerical algorithms for BSDEs which do not suffer from a curse of dimensionality (see for example \cite{han2018solving} or the survey on BSDE numerics \cite{chessari2021numerical}), whereas classical HJB-related methods like finite differences to solve the PDE associated to the stochastic control problem may be very inefficient. 

The remainder of the paper is structured as follows. In Section 2, we solve the problem via a HJB and a BSDE approach in a setting with finite time $T > 0$. In Section 3, we consider the same  problem with a stochastic surplus-dependent time horizon. We illustrate both sections with numerical examples.
\section{Dividend Maximization with a Deterministic Time Horizon}\label{sec:deterministic_time}
In the following, we consider an insurance company whose surplus is given by a Brownian motion with drift $X_t=x+\mu t+\sigma W_t$, where $\{W_t\}$ is a standard Brownian motion on a probability space $\left(\Omega,\mathcal{F},\mathbb{P}\right)$. The considered insurance company is allowed to pay out dividends, where the accumulated dividends until $t$ are given by $C_t := \int_0^tc_s\md s$, yielding for the post dividend surplus $X^c$:
\[
X_t^c=x+\mu t+\sigma W_t-C_t\;.
\]
Let further $\{B_t\}$ be a standard Brownian motion independent of $\{W_t\}$, generating the filtration $\{\mF^B_t\}$, augmented with the $\mathbb{P}$-null sets $\mathcal{N}$.
We let the underlying filtration $\{\mF_t\}$ be the filtration generated by $\{W_t,B_t\}$, also augmented with the probability space's $\mathbb{P}$-null sets $\mathcal{N}$. Moreover, we assume that $\mathcal{F}$ is the completed sigma-algebra generated by $\{W_t,B_t\}$.
In the following, we will use the common convention $\mE_{t,y}[.]=\mE[.|Y_t=y]$ for some process $\{Y_t\}$.
\\We let the dividends be discounted by an Ornstein-Uhlenbeck process ($a,\delta>0$, $b \in \R$):
\[
r_t=re^{-a t}+b(1-e^{-a t})+\delta e^{-at}\int_0^t e^{au}\md B_u\;,
\] 
or as an SDE
\[
\md r_t=a(b-r_t)\md t+\delta \md B_t\;.
\]
Further, we let
\[
U_{t}^s:=\int_t^s r_u\md u,\; t \le s\;.
\]
We allow only strategies $c=\{c_t\}$ with $c_t\in[0,\xi]$ for some given, fixed real number $\xi>0$. Such a  strategy is called admissible if additionally $c$ is adapted to $\{\mF_t\}$. The set of admissible strategies will be denoted by $\mathfrak A$.
\smallskip
\\
In this section, we consider a deterministic time horizon $T\in(0,\infty)$. Differently than in the classical setting we do not stop our considerations once the surplus process ruins. 
As a risk measure we consider the value of expected discounted dividends and define the return function corresponding to some admissible strategy $c=\{c_s\}$ to be
\begin{align*}
&V^c(t,r,x)=\mE_{t,r,x}\Big[\int_t^{T} e^{-U_{t}^s} c_s\md s+e^{-U_{t}^T}X_{T}^c\Big]\;,
\end{align*} 
The value function is then given by
\begin{align*}
&V(t,r,x)=\sup\limits_{c\in\mathfrak A} V^c(t,r,x),\quad (t,r,x)\in[0,T]\times\R\times\R\;,
\\&V(T,r,x)=x, \quad (r,x)\in\R\times \R\;.
\end{align*}
\subsection{HJB approach}

The heuristically derived HJB equation corresponding to the problem (for details see for instance \cite{schmidli}) is
\begin{align}
V_t+\mu V_x+\frac{\sigma^2}2V_{xx}+a(b-r) V_r+\frac{\delta^2}2 V_{rr}-rV+\sup\limits_{0\le c\le\xi}c\{1-V_x\}=0\;.\label{hjb:1}
\end{align}

\subsubsection{Payout on the maximal rate}

In order to get a feeling how the optimal strategy might look like, we first consider the return function corresponding to the strategy ``always pay out on the maximal rate $ \xi$'', i.e.\ $c_s\equiv \xi$ for all $s$.

A simple calculation yields
\[
U_{t}^s=\int_t^s r_u\md u
=\frac{r_t-r_s}a+b (s-t)+\frac{\delta}a (B_s-B_t)\;.
\]
In order to investigate some further properties of the value function, we consider the moment generating function of $U_{t}^s$. 
Using an elementary change of measure technique (see for instance Schmidli \cite[p.\ 216]{schmidli}) or using the formula from Brigo and Mercurio \cite[p.\ 59]{brigo2007interest}, and letting $\tilde b:=b-\frac{\delta^2}{2a^2}$ one obtains
\begin{align}
M(s-t,r)&:=\mE_{t,r}[e^{-U_{t}^s}]\nonumber
\\&=e^{-\tilde b (s-t)}\exp\Big\{\frac{\tilde  b-r}{a}  (1-e^{-a(s-t)})-\frac{\delta^2}{4a^3}(1-e^{-a(s-t)})^2\Big\}\;. \label{U}
\end{align}
Then, one obtains immediately
\begin{align}
V^\xi(t,r,x)&=  \xi \int_t^{ T} M(s-t,r)\md s
+M(T-t,r)\mE_{t,x}\big[X_{T}^\xi\big]\nonumber
\\&=\xi \int_t^{ T} M(s-t,r)\md s
+M(T-t,r)\big\{x+(\mu-\xi)(T-t)\big\}\;,\label{Vxi}
\end{align}
where $X^\xi_t=x+(\mu-\xi) t+\sigma W_t$.
\begin{Remark}
Note that the exponent of the function $M$ defined in \eqref{U} can be split into a linear and an exponential part. For the exponential part it obviously holds, that
\[
\exp\Big\{\frac{\tilde  b-r}{a}  (1-e^{-a(s-t)})-\frac{\delta^2}{4a^3}(1-e^{-a(s-t)})^2\Big\}\le 
\begin{cases}
1&\mbox{: $\tilde b\le r$}
\\e^{\frac{\tilde b-r}a}&\mbox{: $\tilde b> r$}.
\end{cases}
\]
Thus, if $\tilde b\le 0$ and $T$ is big enough, the function $M$ will exponentially increase with the growing time, implying a strong and long-lasting negative interest rate environment. 
\end{Remark}
\begin{Assumption}\label{assumption}
In the following, we assume $\tilde b>0$.
\end{Assumption}
\bigskip
The question arises whether the strategy ``pay out on the maximal rate'' might be optimal on some time interval independent of the values $(r,x)$. This question will be answered in the next section where a candidate for the value function will be inserted into the HJB equation \eqref{hjb:1}.
\subsubsection{Derivation of the value function}
The form of the return function $V^\xi$ corresponding to the strategy ``always pay out on the maximal rate'' hints that the value function may also be linear in $x$. Therefore, we assume that there is a function $G(t,r)$ such that a candidate for the value function is given by
\begin{align}
v(t,r,x):=G(t,r)+x\mE_{t,r}[e^{-U_{T;t}}]=G(t,r)+xM(T-t,r)\;.\label{conjecture}
\end{align}
We need then to check whether $v$ solves the HJB equation \eqref{hjb:1}. In representation \eqref{conjecture}, the factor determining the optimal strategy, i.e.\ to pay or to wait, is given by $M(T-t,r)$ and depends just on $r$ and $T-t$. If $M(T-t,r)>1$ it is optimal to wait and if $M(T-t,r)<1$ then it is optimal to pay on the maximal possible rate $\xi$. 
\\For the convenience of explanations, we will use the following notation for sufficiently smooth functions $f(t,r)$ on $[0,T]\times \R$:
\[
\mathcal L(f)(t,r):= f_t(t,r)+a(b-r) f_r(t,r)+\frac{\delta^2}2 f_{rr}(t,r)-rf(t,r)\;.
\]
\begin{Lemma}
$M(u,r)$ solves the following differential equation
\begin{equation}
-M_u(u,r)+a\big( b-r\big)M_r(u,r)+\frac{\delta^2}2M_{rr}(u,r)-rM(u,r)=0\;.\label{diffF}
\end{equation}
with the boundary conditions $M(0,r)=1$, $\lim\limits_{r\to\infty}M(u,r)=0$ and $\lim\limits_{r\to-\infty}M(u,r)=\infty$ for $u> 0$.
\end{Lemma}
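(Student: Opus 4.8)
The plan is to verify the equation directly from the closed-form expression \eqref{U}, since we already possess an explicit formula for $M$. Write $M(u,r)=\exp\{\Phi(u,r)\}$ with
\[
\Phi(u,r)=-\tilde b\,u+\frac{\tilde b-r}{a}\big(1-e^{-au}\big)-\frac{\delta^2}{4a^3}\big(1-e^{-au}\big)^2\;.
\]
The decisive structural observation is that $\Phi$ is \emph{affine} in $r$: the only $r$-dependence sits in the coefficient $\tfrac{\tilde b-r}{a}$ multiplying $(1-e^{-au})$. Hence $\Phi_{rr}=0$, and since $M_r=M\,\Phi_r$ and $M_{rr}=M(\Phi_{rr}+\Phi_r^2)=M\,\Phi_r^2$, dividing the target equation by the strictly positive factor $M$ reduces it to the scalar identity
\[
-\Phi_u+a(b-r)\Phi_r+\frac{\delta^2}{2}\Phi_r^2-r=0\;.
\]

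First I would compute the two derivatives. Setting $h:=1-e^{-au}$, so that $\partial_u h=ae^{-au}=a(1-h)$, one gets $\Phi_r=-\tfrac1a h$ and
\[
\Phi_u=-\tilde b+\frac{\tilde b-r}{a}\,a(1-h)-\frac{\delta^2}{2a^3}\,h\,a(1-h)\;.
\]
Substituting these into the scalar identity, all terms proportional to $r$ and all quadratic terms in $h$ cancel after expansion, leaving a single residual term of the form $h\big(\tilde b-b+\tfrac{\delta^2}{2a^2}\big)$. By the definition $\tilde b=b-\tfrac{\delta^2}{2a^2}$ this bracket vanishes identically, which establishes \eqref{diffF}. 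I expect this collection-and-cancellation step to be the only real work: it is elementary but requires careful bookkeeping of the $e^{-au}$ factors, and the fact that it closes up is precisely what pins down the value of $\tilde b$ (equivalently, it is the deterministic counterpart of the $-\tfrac{\delta^2}{2a^2}$ correction appearing in the change-of-measure/Feynman--Kac derivation of \eqref{U}).

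The boundary behaviour follows by inspection of $\Phi$. At $u=0$ we have $h=0$, hence $\Phi(0,r)=0$ and $M(0,r)=1$. For fixed $u>0$ we have $h>0$, so the affine term $-\tfrac{r}{a}\,h$ dominates as $|r|\to\infty$: it drives $\Phi\to-\infty$ as $r\to\infty$, giving $\lim_{r\to\infty}M(u,r)=0$, and $\Phi\to+\infty$ as $r\to-\infty$, giving $\lim_{r\to-\infty}M(u,r)=\infty$. As a consistency check one may note that \eqref{diffF} is exactly the Kolmogorov/Feynman--Kac equation for $r\mapsto\mE_{t,r}\big[e^{-U_t^{s}}\big]$ viewed through the time change $u=s-t$, under which $\partial_u=-\partial_t$ turns the generator identity $\mathcal L(M)=0$ into the stated form; but the direct verification above is self-contained and avoids invoking the regularity hypotheses needed to justify Feynman--Kac.
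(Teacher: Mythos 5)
Your verification is correct: the affine-in-$r$ structure of the exponent, the reduction to the scalar identity $-\Phi_u+a(b-r)\Phi_r+\tfrac{\delta^2}{2}\Phi_r^2-r=0$, the residual term $h\big(\tilde b-b+\tfrac{\delta^2}{2a^2}\big)$ vanishing by the definition of $\tilde b$, and the boundary limits all check out. The paper's proof is simply the remark that it is ``straightforward,'' meaning exactly this direct computation from the closed form \eqref{U}, so your proposal fills in the intended argument rather than taking a different route.
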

\begin{proof}
The proof is straightforward.
\end{proof}
If $v$ in \eqref{conjecture} is indeed the value function then the function $G$ should fulfil
\[
\mathcal L(G)(t,r)+\mu M(T-t,r)+\sup\limits_{0\le c\le \xi} c\{1-M(T-t,r)\}=0\;.
\]
Our next step is to find the function $G$ solving the above differential equation. For this purpose, we have to investigate the properties of the function $M$ in order to get rid of the supremum expression in the differential equation above.
\subsubsection{Properties of the function $M$}
In this section, we investigate the properties of the function $M$. Recall from \eqref{U} that it holds
\begin{align*}
M(T-t,r)&=\mE_{t,r}\big[e^{U_{t}^T}\big]
\\&=\exp\Big\{-\tilde b(T-t)+\frac{\tilde b-r-\frac{\delta^2}{2a^2}}a\big(1-e^{-a(T-t)}\big)+\frac{\delta^2}{4a^3}\big(1-e^{-2a(T-t)}\big)\Big\}\;.
\end{align*}
It is immediately clear that $M$ is strictly decreasing in $r$ on $[0,T)\times \R$.
\\We are searching for a curve $\alpha(t)$ such that for $(t,r)$ with $r>\alpha(t)$ it holds $M(T-t,r)<1$ and for $r< \alpha(t)$ it holds $M(T-t,r)>1$. Consider the exponent of $M$ and let
\[
\ln\big(M(T-t,r)\big)=-\tilde b(T-t)+\frac{\tilde b-r-\frac{\delta^2}{2a^2}}a\big(1-e^{-a(T-t)}\big)+\frac{\delta^2}{4a^3}\big(1-e^{-2a(T-t)}\big)\;.
\]

Obviously, $\ln(M(T-t,r))$ is strictly decreasing in $r$ for $t\in [0,T]$. 
Solving the equation $\ln(M(T-t,r)) = 0$ for $r$ guides us to define
\begin{align}
\alpha(t):&=-a\tilde b\frac{T-t}{1-e^{-a(T-t)}}+\frac{\delta^2}{4a^2}\big(1+e^{-a(T-t)}\big) +\tilde b-\frac{\delta^2}{2a^2} \nonumber
\\&=\tilde b\Big(1-a\frac{T-t}{1-e^{-a(T-t)}}\Big)-\frac{\delta^2}{4a^2}\big(1-e^{-a(T-t)}\big)\;.\label{alpha}
\end{align}
The curve $\alpha$ is uniquely defined. Due to $M_r<0$ on $[0,T)\times \R$, $\alpha$ is separating the sets 
\begin{align}
S_1:=\{(t,r):\ M(T-t,r)>1\}\;\; \mbox{and}\;\; S_2:=\{(t,r):\ M(T-t,r)<1\}.\label{S}
\end{align}
\begin{Remark}
The following properties hold true:
\begin{itemize}
\item $\alpha(T)=0$, $\alpha'(T)=ba/2>0$.
\item $\frac{T-t}{1-e^{-a(T-t)}}$ is decreasing in $t$.
\item If $\tilde b> 0$, then because $-a\frac{T-t}{1-e^{-a(T-t)}}+1\le 0$ it holds $\alpha(t)< 0$ for all $t\in[0,T)$.
\item Since $\tilde b> 0$, the function $\alpha$ is strictly increasing in $t$ with $\alpha(t)\in[\alpha(0),0]$.
\end{itemize}
\end{Remark} 
\begin{Example}
In Figure \ref{fig1:total}, we see the function $\alpha$ for different parameters. On the right picture, the curve is increasing with $\tilde b>0$. The left picture with $\tilde b<0$ shows a curve that decreases first and increases close to the time horizon $T=5$. 
\\As we assume $\tilde b>0$, see Assumption \ref{assumption}, the curve given on the left side of Figure \ref{fig1:total} is impossible in our setting. 
\end{Example}

\begin{figure*}[t]
	\centering
	\begin{subfigure}[t]{0.4\textwidth}
		\centering
		\scalebox{0.75}{
			\includegraphics[height=2.8in]{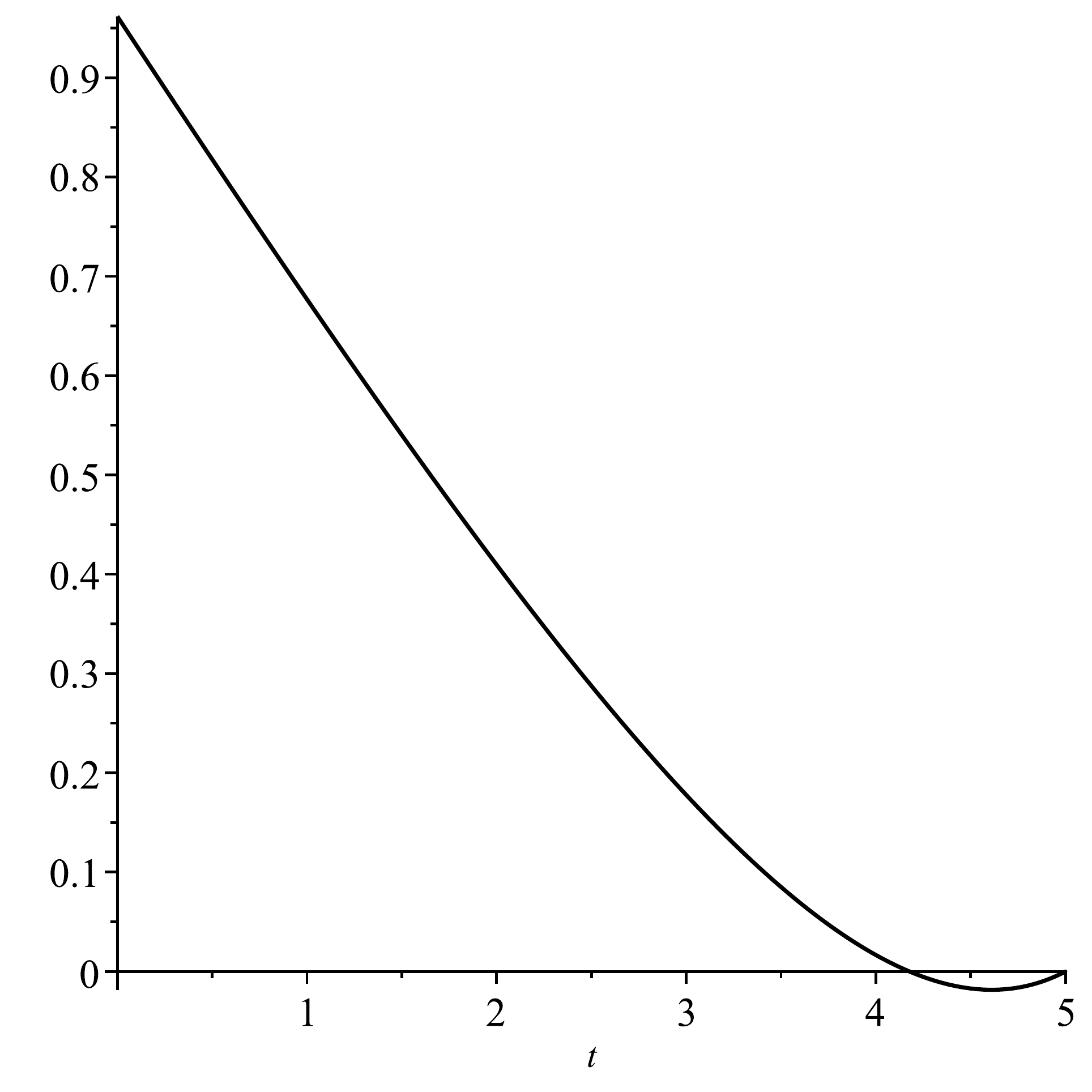}\label{fig:1}}
	\end{subfigure}%
	~ 
	\begin{subfigure}[t]{0.4\textwidth}
		\centering
		\scalebox{0.75}{
			\includegraphics[height=2.8in]{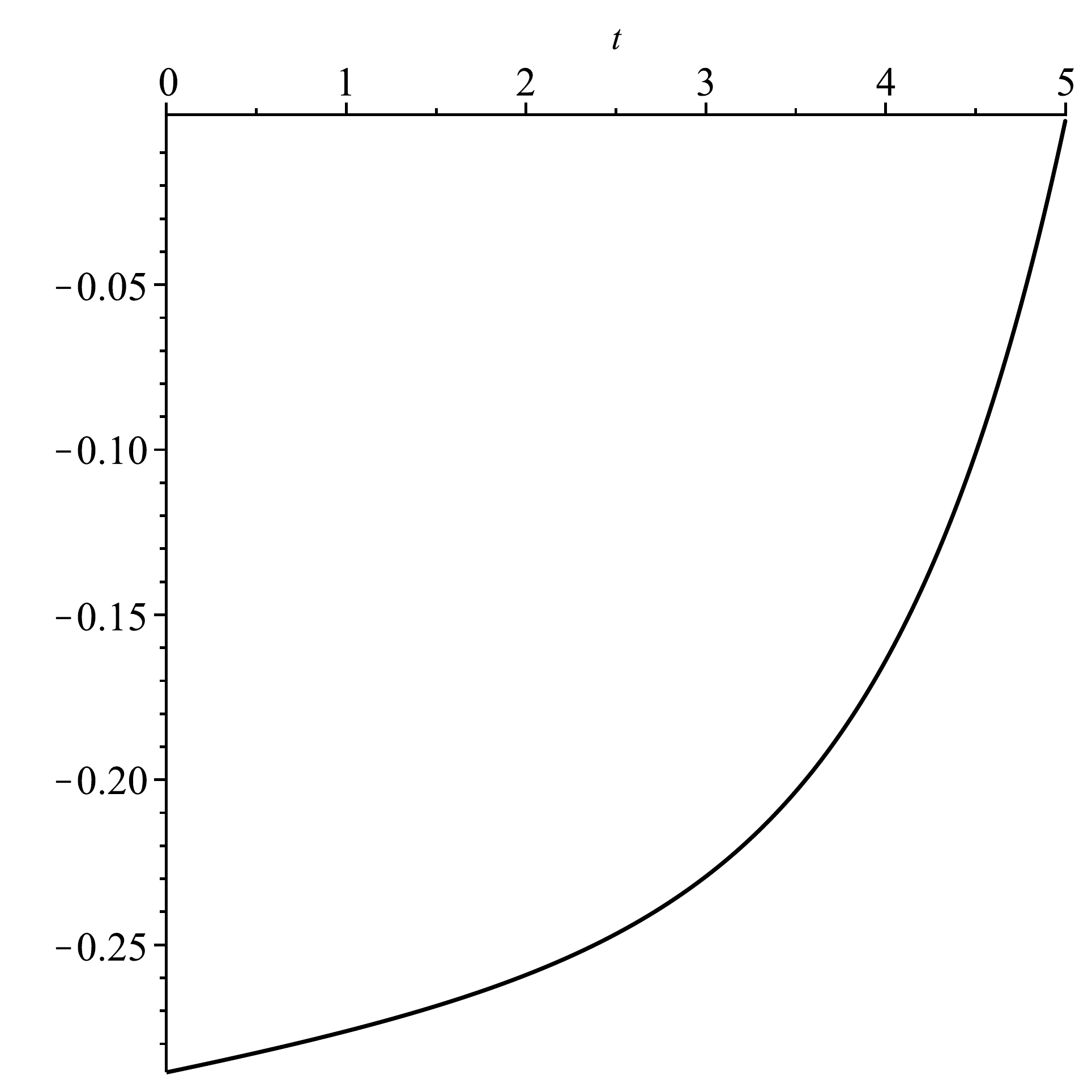}\label{fig:2}}
	\end{subfigure}
	\caption{The curve $\alpha(t)$ for $a=\delta=1$, $T=5$, $b=0.2$ (left picture) and $b=0.51$ (right picture).}
	\label{fig1:total}
\end{figure*}
\noindent
Now, knowing the function $\alpha$ we can investigate the properties of the remaining function $G$ from \eqref{conjecture}.
\subsubsection{Properties of the function $G$}
In order to find a function $G$ such that $v$ in \eqref{conjecture} solves the HJB equation \eqref{hjb:1} we define an auxiliary function $v(t,r)=xM(T-t,r)+\tilde G(t,r)$ with
\begin{align}
\tilde G(t,r):=\mE_{t,r}\bigg[\int_t^T e^{-U_{t}^s}\cdot \xi\one_{[r_s>\alpha(s)]}\md s +e^{-U_{t}^T}\int_t^T \mu -\xi\one_{[r_s>\alpha(s)]}\md s\bigg]\label{defG}\;,
\end{align}
i.e.\ $v$ is the return function corresponding to the strategy $c_s=\xi\one_{[r_s>\alpha(s)]}$. 
Our target is to show that the function $\tilde G(t,r)$ solves the differential equation
\begin{align}
\begin{split}\label{tildeG}
\mathcal L(\tilde G)(t,r)+\mu M(T-t,r)+\xi\one_{[r> \alpha(t)]}\big(1-M(T-t,r)\big)=0 
\end{split}
\end{align}
with boundary conditions $\tilde G(T,r)=0$, $\lim\limits_{r\to\infty}\tilde G(t,r)=0$ and $\lim\limits_{r\to-\infty}\tilde G(t,r)=\infty$.
\\Letting
\begin{equation}
\gamma(t,r):= \mE_{t,r}\bigg[\int_t^T \big\{e^{-U_{t}^s}-e^{-U_{t}^T}\big\}\cdot \one_{[r_s>\alpha(s)]}\md s \bigg]\label{gamma}
\end{equation}
we can rewrite the function $\tilde G$ as follows
\begin{align*}
\tilde G(t,r)&=\mu (T-t) M(T-t,r)+ \xi\mE_{t,r}\bigg[\int_t^T \big\{e^{-U_{t}^s}-e^{-U_{t}^T}\big\}\cdot \one_{[r_s>\alpha(s)]}\md s \bigg]
\\&=\mu (T-t) M(T-t,r)+ \xi \gamma(t,r)\;.
\end{align*}
\begin{Remark}\label{rem:M}
Note that $M(u,r)$ solves Differential equation \eqref{diffF}. Therefore, for $\mu (T-t) M(T-t,r)$ one obtains
\[
\mathcal L\Big(\mu(T-t)M(T-t,r)\Big)=-\mu M(T-t,r)
\]
The function $\mu (T-t)M(T-t,r)$ attains zero at $t=T$, $\lim\limits_{r\to\infty}\mu(T-t)M(T-t,r)=0$ and $\lim\limits_{r\to-\infty}\mu(T-t)M(T-t,r)=\infty$. Moreover, it holds  $\mu(T-t)M(T-t,\alpha(t))=\mu(T-t)$.
\end{Remark}
\begin{Remark}
Consider the return function $V^\xi$ given in \eqref{Vxi} corresponding to the strategy ``always pay out on the maximal rate''. 
In the same way like in Remark \ref{rem:M}, one can show that $\xi\int_t^T M(T-s,r)\md s$ solves $\mathcal L\Big(\xi\int_t^T M(T-s,r)\md s\Big)=-\xi \big(1-M(T-t,r)\big)$. It means that $V^\xi$ solves
\[
\mathcal L(V^\xi)(t,r)+\mu M(T-t,r)+\xi\big(1-M(T-t,r)\big)=0
\]
on $[0,T] \times \R$.
\\We now see that for $r<\alpha(t)$, the function $V^\xi$ does not solve HJB equation \eqref{hjb:1}.
\end{Remark}
In the following, we concentrate on the function $\gamma$ given in \eqref{gamma}. Due to Remark \ref{rem:M} we need to show that $\gamma$ fulfils 
\begin{align}
\mathcal L(\gamma)(t,r)+\one_{[r> \alpha(t)]}\big(1-M(T-t,r)\big)=0\;. \label{eq:gamma}
\end{align}
\begin{Lemma}\label{lem:repr}
The function $\gamma$ defined in \eqref{gamma} can be written as 
\[
\int_0^{T-t} \int_{\alpha(u+t)}^\infty M(u,r)\cdot \big(1-M(T-u-t,z)\big)\cdot \varphi(z,u,r)\md z\md u,
\]
where $M$ is given in \eqref{U} and
\begin{align}
\begin{split}\label{phi}
\varphi(z,u,r):=&\frac1{\sqrt{2\pi \frac{\delta^2}{2a}(1-e^{-2au})}}
\\&\times \exp\left\{-\frac{\big(z-re^{-au}-b(1-e^{-au})+\frac{\delta^2}{2a^2}(1-e^{-au})^2\big)^2}{2\frac{\delta^2}{2a}(1-e^{-2au})}\right\}\;.
\end{split}
\end{align}
\end{Lemma}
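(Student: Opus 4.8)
The plan is to convert the path-dependent expectation defining $\gamma$ into an ordinary double integral in two movements: first strip away the time integral and the ``terminal'' exponential using Fubini and the Markov property of $\{r_u\}$, and then evaluate the remaining one-dimensional expectations of functions of $r_s$, each weighted by $e^{-U_t^s}$, through a Gaussian tilting (equivalently, change-of-measure) argument. Throughout, Assumption \ref{assumption} together with the boundedness of the indicators guarantees the integrability needed to justify the interchanges.

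First I would apply Fubini to write $\gamma(t,r)=\int_t^T \mE_{t,r}\big[\{e^{-U_t^s}-e^{-U_t^T}\}\,\one_{[r_s>\alpha(s)]}\big]\md s$. For fixed $s\in[t,T]$ I would then use the additivity $U_t^T=U_t^s+U_s^T$ and condition on $\mF_s$. Since $e^{-U_t^s}$ and $\one_{[r_s>\alpha(s)]}$ are $\mF_s$-measurable and $\mE[e^{-U_s^T}\mid\mF_s]=M(T-s,r_s)$ by the Markov property of the Ornstein-Uhlenbeck process and the definition of $M$, this gives
\[
\mE_{t,r}\big[e^{-U_t^T}\one_{[r_s>\alpha(s)]}\big]=\mE_{t,r}\big[e^{-U_t^s}\one_{[r_s>\alpha(s)]}M(T-s,r_s)\big],
\]
so that the inner expectation collapses to $\mE_{t,r}\big[e^{-U_t^s}\one_{[r_s>\alpha(s)]}\big(1-M(T-s,r_s)\big)\big]$.

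The crux will be to evaluate, for a bounded measurable $g$ and $u:=s-t$, the weighted expectation $\mE_{t,r}\big[e^{-U_t^s}g(r_s)\big]$ and to show it equals $M(u,r)\int_\R g(z)\,\varphi(z,u,r)\md z$. I would note that, conditionally on $r_t=r$, the pair $(U_t^s,r_s)$ is jointly Gaussian, each being a linear functional of the driving Brownian motion $B$ on $[t,s]$; writing $U_t^s=\mu_U+I_1$ and $r_s=m_0+I_2$ with centered Gaussian $I_1,I_2$ and $m_0=re^{-au}+b(1-e^{-au})$, the elementary tilting identity $\mE[e^{-I_1}h(I_2)]=e^{\frac12\mathrm{Var}(I_1)}\,\mE[h(I_2-\mathrm{Cov}(I_1,I_2))]$ shows that the weight $e^{-U_t^s}$ leaves the variance $\frac{\delta^2}{2a}(1-e^{-2au})$ of $r_s$ unchanged while shifting its mean by $-\mathrm{Cov}(U_t^s,r_s)$. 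A short computation of that covariance via stochastic Fubini yields $\mathrm{Cov}(U_t^s,r_s)=\frac{\delta^2}{2a^2}(1-e^{-au})^2$, which is precisely the mean shift appearing in $\varphi$ in \eqref{phi}, while the normalizing constant is $\mE_{t,r}[e^{-U_t^s}]=M(u,r)$ (the case $g\equiv1$). This identifies the tilted law of $r_s$ as the density $\varphi(\cdot,u,r)$; alternatively one may phrase the same step as a Girsanov change of measure, exactly as in the derivation of $M$ itself.

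Finally I would apply this identity with $g(z)=\one_{[z>\alpha(s)]}\big(1-M(T-s,z)\big)$, obtaining for the inner expectation the value $M(s-t,r)\int_{\alpha(s)}^\infty\big(1-M(T-s,z)\big)\varphi(z,s-t,r)\md z$, and substitute back into the outer integral. Changing the outer variable to $u=s-t$, so that $s=u+t$, $T-s=T-u-t$ and $\alpha(s)=\alpha(u+t)$, reproduces the stated double integral. I expect the only genuine obstacle to be the Gaussian computation in the crux step, and in particular the correct identification of the covariance-induced mean shift; the remaining manipulations are routine bookkeeping with Fubini and the Markov property.
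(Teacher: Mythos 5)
Your proposal is correct, and while its skeleton matches the paper's proof, the crux step is carried out by a genuinely different argument. The first movement is identical in both: apply Fubini/Tonelli, split $U_t^T=U_t^s+U_s^T$, and use the Markov property of $\{r_u\}$ to replace $\mE\big[e^{-U_s^T}\mid \mF_s\big]$ (resp.\ the paper's conditioning on $r_s=z$) by $M(T-s,r_s)$, reducing the problem to the weighted expectation $\mE_{t,r}\big[e^{-U_t^s}\one_{[r_s>\alpha(s)]}\big(1-M(T-s,r_s)\big)\big]$. The difference lies in how this expectation is evaluated. The paper disintegrates over $\{r_s=z\}$ and cites the Ornstein--Uhlenbeck bridge formula 1.8.7(1) of Borodin--Salminen for $\mE_{t,r}\big[e^{-U_t^s}\mid r_s=z\big]$, then multiplies by the Gaussian marginal density of $r_s$ and completes the square to recognize the product $M(s-t,r)\cdot\varphi(z,s-t,r)$. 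You instead exploit that, given $r_t=r$, the pair $(U_t^s,r_s)$ is jointly Gaussian (both are Wiener integrals of $B$ plus deterministic terms) and apply the exponential tilting identity, so that the weight $e^{-U_t^s}$ shifts the mean of $r_s$ by $-\mathrm{Cov}(U_t^s,r_s)$, leaves its variance unchanged, and contributes the normalizing constant $\mE_{t,r}[e^{-U_t^s}]=M(s-t,r)$. Your covariance value checks out: writing $U_t^s=\frac{r_t-r_s}{a}+b(s-t)+\frac{\delta}{a}(B_s-B_t)$ gives $\mathrm{Cov}(U_t^s,r_s)=\frac{\delta^2}{a}\int_0^{u}\big(1-e^{-aw}\big)e^{-aw}\md w=\frac{\delta^2}{2a^2}\big(1-e^{-au}\big)^2$ with $u=s-t$, which is exactly the mean shift in \eqref{phi}. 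As for what each approach buys: the paper's route imports the explicit bridge Laplace transform from a handbook and needs only algebra afterwards, but that completing-the-square step is opaque; your route is self-contained, avoids the external citation, and explains conceptually where the shift in $\varphi$ comes from (it is the covariance of the accumulated discount with the terminal rate), at the modest cost of justifying the standard Gaussian tilting identity. Both are valid proofs of the lemma.
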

\begin{proof}
$\bullet$ Using Tonelli's theorem and the law of total probability, we get
\begin{align}
\gamma(t,r)&=\mE_{t,r}\bigg[\int_t^T \big\{e^{-U_{t}^s}-e^{-U_{t}^T}\big\}\cdot \one_{[r_s>\alpha(s)]}\md s \bigg] \nonumber
\\&= \int_t^T \mE_{t,r}\big[\big\{e^{-U_{t}^s}-e^{-U_{t}^T}\big\}\cdot \one_{[r_s>\alpha(s)]}\big]\md s \nonumber
\\&= \int_t^T \int_{\alpha(s)}^\infty \mE_{t,r}\big[\big\{e^{-U_{t}^s}-e^{-U_{t}^s-U_s^T}\big\}|r_s=z\big]\cdot \mP_{t,r}[r_s\in\md z]\md s \nonumber
\\&= \int_t^T \int_{\alpha(s)}^\infty \mE_{t,r}\big[e^{-U_{t}^s}|r_s=z\big]\cdot\big(1-\mE\big[e^{-U_s^T}|r_s=z\big]\big)\cdot \mP_{t,r}[r_s\in\md z]\md s \nonumber
\\&= \int_t^T \int_{\alpha(s)}^\infty \mE_{t,r}\big[e^{-U_{t}^s}|r_s=z\big]\cdot\big(1-M(T-s,z)\big)\cdot \mP_{t,r}[r_s\in\md z]\md s\;.\label{lastline}
\end{align}
$\bullet$ The term $\mP_{t,r}[r_s\in\md z]$, inside the above integrals, is the density  of the random variable $r_{s-t}$. Since, $r_{s-t}$ is normally distributed with mean $\beta(s-t,r):=re^{-a(s-t)}+b(1-e^{-a(s-t)})$ and variance $\frac{\delta^2}{2a}(1-e^{-2a(s-t)})$, see \cite[p.\ 522]{BorSa}, it holds that
\[
\mP_{t,r}[r_s\in\md z]=\frac1{\sqrt{2\pi}\frac{\delta^2}{2a}(1-e^{-2a(s-t)})}\exp\bigg\{-\frac{\big(z-\beta(s-t,r)\big)^2}{2\frac{\delta^2}{2a}(1-e^{-2a(s-t)})}\bigg\}\;.
\]
$\bullet$ Consider now the first factor inside the integrals. Formula 1.8.7(1) in \cite[p.\ 525]{BorSa} along with $(1-e^{-2a(s-t)})=(1-e^{-a(s-t)})(1+e^{-a(s-t)})$ yield
\begin{align*}
\mE_{t,r}\Big[e^{-U_{t}^s}|r_s=z\Big]
&=e^{-\tilde b(s-t)}\exp\bigg\{-\frac{z+r-2b+\frac{\delta^2}{a^2}}a\cdot \frac{1-e^{-a(s-t)}}{1+e^{-a(s-t)}}\bigg\}
\\&=M(s-t,r)\cdot \exp\bigg\{-2\cdot \frac{z-\beta(s-t,r)}{2\frac{\delta^2}{2a}(1-e^{-2a(s-t)})}\cdot \frac{\delta^2}{2a^2}(1-e^{-a(s-t)})^2\bigg\}
\\&\quad {}\times \exp\bigg\{-\Big(\frac{\delta^2}{2a^2}\Big)^2\frac{(1-e^{-a(s-t)})^4}{2\frac{\delta^2}{2a}(1-e^{-2a(s-t)})}\bigg\}\;.
\end{align*}
$\bullet$ Then, completing the square gives
\begin{align*}
\mE_{t,r}\Big[e^{-U_{t}^s}|r_s=z\Big]\cdot \mP_{t,r}[r_s\in\md z]
=M(s-t,r)\cdot\varphi(z,s-t,r)\;.
\end{align*}
$\bullet$ Changing the variable $u=s-t$ in \eqref{lastline} yields the desired result. 
\end{proof}
\begin{Lemma}\label{u=0}
The function $\gamma$ defined in \eqref{gamma} fulfils $\gamma\in\mathcal C^{1,2}([0,T)\times\R)$.
\end{Lemma}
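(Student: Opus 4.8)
The plan is to start from the integral representation of Lemma \ref{lem:repr},
\[
\gamma(t,r)=\int_0^{T-t}\int_{\alpha(u+t)}^\infty M(u,r)\big(1-M(T-u-t,z)\big)\varphi(z,u,r)\,\md z\,\md u,
\]
and to produce explicit candidates for $\gamma_t$, $\gamma_r$ and $\gamma_{rr}$ by differentiating under the integral sign, then to justify each differentiation and the continuity of the result by dominated convergence. The only genuine difficulty is the degeneracy of the Gaussian kernel $\varphi$ from \eqref{phi} as $u\downarrow 0$, where its variance $v(u):=\frac{\delta^2}{2a}(1-e^{-2au})$ tends to $0$; away from $u=0$ every factor is smooth and the $z$-integral converges because of the Gaussian tail of $\varphi$.

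For the $t$-derivative I would apply the Leibniz rule to both $t$-dependent limits, the outer upper limit $T-t$ and the inner lower limit $\alpha(u+t)$. Two boundary terms arise, and both vanish: at $u=T-t$ the factor $1-M(T-u-t,z)=1-M(0,z)=0$ by the boundary condition $M(0,\cdot)=1$ from \eqref{diffF}, and at $z=\alpha(u+t)$ the factor $1-M(T-u-t,\alpha(u+t))=0$ by the very definition of $\alpha$ in \eqref{alpha}, which makes $M(T-s,\alpha(s))\equiv 1$. What remains is $\gamma_t(t,r)=\int_0^{T-t}\int_{\alpha(u+t)}^\infty M(u,r)\,\partial_t M(T-u-t,z)\,\varphi(z,u,r)\,\md z\,\md u$ (up to sign), whose integrand is smooth, has at most linear growth in $z$, and carries no $v(u)^{-1}$ factor, so it is dominated locally uniformly by a Gaussian-integrable function and is continuous by dominated convergence.

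The $r$-derivatives are where the obstacle sits. Here no integration limit depends on $r$, so the $r$-derivatives act only on the smooth factors $M(u,r)$ and $\varphi$. Differentiating $\varphi$ once produces a factor of order $v(u)^{-1}$ times $(z-m)$, contributing a concentration of order $u^{-1/2}$ after taking the first Gaussian moment; since $\int_0 u^{-1/2}\,\md u<\infty$, $\gamma_r$ presents no problem. Differentiating twice, however, naively produces an order $u^{-1}$ concentration, which is \emph{not} integrable near $u=0$. To cure this I would use that $\varphi$ is Gaussian in $z$ with $r$-independent variance and mean $m(u,r)$ satisfying $\partial_r m=e^{-au}$, whence
\[
\partial_r\varphi=-e^{-au}\,\partial_z\varphi,\qquad \partial_{rr}\varphi=e^{-2au}\,\partial_{zz}\varphi,
\]
and then integrate by parts in $z$ twice. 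The boundary contributions at $z=\alpha(u+t)$ coming from the factor $1-M$ and its first $z$-derivative are killed once more by $1-M(T-u-t,\alpha(u+t))=0$, leaving bounded $z$-integrals of the form $\int_{\alpha}^\infty(\text{smooth, bounded})\cdot\varphi\,\md z$ together with a single surviving boundary integral
\[
-\int_0^{T-t}M(u,r)e^{-2au}\,\partial_z M(T-u-t,\alpha(u+t))\,\varphi(\alpha(u+t),u,r)\,\md u.
\]
Its kernel concentrates only like $u^{-1/2}$ near $u=0$, hence is integrable, and since $\varphi(\alpha(u+t),u,r)\le(2\pi v(u))^{-1/2}$ uniformly in $(t,r)$ on compacts, dominated convergence yields its continuity on all of $[0,T)\times\R$, including across the curve $r=\alpha(t)$.

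Putting these together, the candidate expressions for $\gamma_t$, $\gamma_r$, $\gamma_{rr}$ are continuous and are locally dominated by integrable functions uniformly in the differentiation parameter, which both legitimizes the differentiation under the integral sign and delivers the continuity of the derivatives; hence $\gamma\in\mathcal C^{1,2}([0,T)\times\R)$. I expect the main obstacle to be precisely the non-integrable $u^{-1}$ singularity of the second $r$-derivative, and the key device to overcome it is the identity $\partial_{rr}\varphi=e^{-2au}\partial_{zz}\varphi$ combined with the free-boundary cancellation $M(T-u-t,\alpha(u+t))=1$.
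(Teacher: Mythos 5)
Your proposal is correct and follows the paper's overall strategy---differentiate the representation of Lemma \ref{lem:repr} under the integral sign and justify existence and continuity of the derivatives by dominated convergence---but it treats the crucial degeneracy at $u=0$ by a different and more explicit device. The paper's proof normalizes the Gaussian kernel through the substitution $y=\bigl(z-\Delta(u,r)\bigr)\big/\sqrt{\delta^2(1-e^{-2au})/(2a)}$, computes the limit of the inner $z$-integral as $u\to0$ (obtaining $1-M(T-t,r)$ on one side of the curve $r=\alpha(t)$ and $0$ on the other; the two cases appear interchanged in the paper, evidently a typo, as comparison with \eqref{eq:gamma} and Lemma \ref{lem:diff} shows), and then dispatches all derivatives with the phrase ``using similar arguments \dots and the Leibniz integral rule''; in particular the $O(u^{-1})$ singularity of $\varphi_{rr}$, which you correctly identify as the real obstruction to $\gamma\in\mathcal C^{1,2}$, is never addressed explicitly there. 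Your argument attacks exactly that point: the identities $\partial_r\varphi=-e^{-au}\partial_z\varphi$ and $\partial_{rr}\varphi=e^{-2au}\partial_{zz}\varphi$, followed by two integrations by parts in $z$ with the boundary terms killed by $M(T-s,\alpha(s))\equiv1$ (the same cancellation the paper uses implicitly in Lemma \ref{lem:diff} via $M(T-t,\alpha(t))=1$), reduce the singularity to an integrable $O(u^{-1/2})$ concentration. The two devices are in fact equivalent: differentiating in $r$ \emph{after} the paper's substitution produces precisely your surviving boundary term with the same $u^{-1/2}$ rate, again because $M(T-u-t,\alpha(u+t))=1$. What your route buys is a complete, checkable estimate where the paper only gestures; what the paper's substitution buys is the explicit limit value of the inner integral at $u=0$, which is not needed for smoothness but is reused in Lemma \ref{lem:diff} to evaluate $\mathcal L(\gamma)$.
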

\begin{proof}
Recall that $\varphi$ is the density of a normal distribution, see \eqref{phi}. 
Let further 
\[
\Delta(u,r):=r^{-au}+b(1-e^{-au})-\frac{\delta^2}{2a^2}(1-e^{-au})^2\;.
\]
Then, 
\[
\lim\limits_{u\to 0}\frac{z-\Delta(u,r)}{\sqrt{\frac{\delta^2}{2a}(1-e^{-2a u})}} =
\begin{cases}
\infty &\mbox{: $z>r$}\\
-\infty &\mbox{: $z<r$}\\
0 &\mbox{: $z=r$}\;.
\end{cases}
\]
Changing the variable by letting $y=\frac{z-\Delta(u,r)}{\sqrt{\frac{\delta^2}{2a}(1-e^{-2a u})}}$ yields 
\small{
\begin{align*}
&\lim\limits_{u\to 0}\int_{\alpha(t+u)}^\infty \varphi(z,u,r)\cdot \big(1-M(T-t-u,z)\big)\md z
\\&= \lim\limits_{u\to 0}\int_{\tfrac{\alpha(t+u)-\Delta(u,r)}{\sqrt{\delta^2(1-e^{-2a u})/(2a)}}}^\infty \frac {e^{-y^2/2}}{\sqrt{2\pi}}\cdot \left(1-M\left(T-t-u,\;y\sqrt{\frac{\delta^2}{2a}(1-e^{-2a u})}+\Delta(u,r)\right)\right)\md y
\\&=\begin{cases}
1-M(T-t,r) &\mbox{: $\alpha(t)>r\;,$}\\
0 &\mbox{: $\alpha(t)\le r\;.$}
\end{cases}
\end{align*}
}
Using similar arguments, the representation of $\gamma$ given in Lemma \ref{lem:repr} and the Leibniz integral rule yields the claim. 
\end{proof}
\begin{Lemma}\label{lem:diff}
The function $\gamma$ defined in \eqref{gamma} solves Differential equation \eqref{eq:gamma}.
\end{Lemma}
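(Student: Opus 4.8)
The plan is to verify \eqref{eq:gamma} by differentiating the explicit representation of $\gamma$ from Lemma \ref{lem:repr} and applying the operator $\mathcal L$ directly, exploiting two structural facts. First, writing $g(u,r,z):=M(u,r)\varphi(z,u,r)$, this is precisely the discounted transition kernel appearing in \eqref{lastline}, so that for any fixed terminal time $s$ and payoff $f$ the function $(t,r)\mapsto\mE_{t,r}[e^{-U_t^s}f(r_s)]=\int g(s-t,r,z)f(z)\md z$ solves $\mathcal L(\cdot)=0$. After the substitution $u=s-t$ this says that $g$, as a function of $(u,r)$ with $z$ frozen, satisfies the backward equation
\[
\partial_u g(u,r,z)=a(b-r)\,\partial_r g(u,r,z)+\tfrac{\delta^2}2\,\partial_{rr}g(u,r,z)-r\,g(u,r,z),
\]
which is the density version of \eqref{diffF} (integrating over $z$ recovers \eqref{diffF} since $\int\varphi\,\md z=1$). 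Second, Lemma \ref{u=0} guarantees $\gamma\in\mathcal C^{1,2}([0,T)\times\R)$ and, crucially, supplies the $u\to0$ limit of the inner integral, which will reappear as the boundary term carrying the inhomogeneity.

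Concretely, I would set $\Xi(u,r,t):=\int_{\alpha(u+t)}^\infty g(u,r,z)\big(1-M(T-u-t,z)\big)\md z$, so that $\gamma(t,r)=\int_0^{T-t}\Xi(u,r,t)\md u$. The key observation is that $t$ enters $\Xi$ only through the lower limit $\alpha(u+t)$ and the factor $M(T-u-t,z)$, i.e.\ through the combinations $u+t$ and $(T-t)-u$; differentiating in $t$ or in $u$ therefore produces identical Leibniz and $M_u$ contributions. Hence $\partial_u\Xi-\partial_t\Xi$ is exactly the term in which $\partial_u$ hits the explicit $u$ inside $g$, and by the backward equation above this equals $\big(a(b-r)\partial_r+\tfrac{\delta^2}2\partial_{rr}-r\big)\Xi$, since neither $\alpha(u+t)$ nor $1-M(T-u-t,z)$ depends on $r$.

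Adding the contributions then makes $\mathcal L(\gamma)$ telescope. Differentiating $\gamma=\int_0^{T-t}\Xi\,\md u$ in $t$ gives a boundary term at $u=T-t$ plus $\int_0^{T-t}\partial_t\Xi\,\md u$; the boundary term vanishes because $M(0,z)=1$ forces $\Xi(T-t,r,t)=0$. Adding the spatial part $\int_0^{T-t}\big(a(b-r)\partial_r+\tfrac{\delta^2}2\partial_{rr}-r\big)\Xi\,\md u=\int_0^{T-t}(\partial_u\Xi-\partial_t\Xi)\,\md u$ cancels the $\partial_t\Xi$ integral and leaves $\mathcal L(\gamma)=\int_0^{T-t}\partial_u\Xi\,\md u=\Xi(T-t,r,t)-\Xi(0^+,r,t)$ by the fundamental theorem of calculus. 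The upper endpoint again vanishes, while $M(u,r)\to1$ as $u\to0$ together with the limit computed in Lemma \ref{u=0} gives $\Xi(0^+,r,t)=\one_{[r>\alpha(t)]}\big(1-M(T-t,r)\big)$. Therefore $\mathcal L(\gamma)=-\one_{[r>\alpha(t)]}\big(1-M(T-t,r)\big)$, which is \eqref{eq:gamma}.

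I expect the main obstacle to be analytic rather than algebraic: justifying the differentiation under the integral sign, the Leibniz rule at the moving limit $\alpha(u+t)$, and the fundamental theorem of calculus in $u$ uniformly up to $u=0$, where $\varphi(\cdot,u,r)$ degenerates to a Dirac mass at $r$. This is exactly the regime controlled by Lemma \ref{u=0}, so that lemma does the heavy lifting and the remaining work is bookkeeping of the cancellations described above. As an alternative one could bypass the representation entirely and argue probabilistically: apply \Ito's formula to $e^{-U_0^\tau}\gamma(\tau,r_\tau)$ together with the auxiliary finite-variation terms arising from the Markov decomposition of the defining expectation, and read off \eqref{eq:gamma} by setting the drift of the resulting martingale to zero; this route again needs only $\gamma\in\mathcal C^{1,2}$ and that $M(T-\cdot,\cdot)$ solves \eqref{diffF}.
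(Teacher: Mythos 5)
Your proof is correct and is essentially the paper's own argument: both differentiate the representation from Lemma \ref{lem:repr} under the integral sign, recognize the integrand of $\mathcal L(\gamma)$ as the total $u$-derivative $\partial_u\big(M(u,r)\int_{\alpha(u+t)}^\infty\varphi(z,u,r)\,(1-M(T-u-t,z))\,\md z\big)$, and evaluate the resulting boundary terms via $M(0,\cdot)=1$ at $u=T-t$ and the $u\to 0$ limit supplied by Lemma \ref{u=0}. The only (cosmetic) difference is that you bundle the paper's two separate PDEs — equation \eqref{diffF} for $M$ and the drift-adjusted equation for $\varphi$ — into the single backward Kolmogorov equation for the discounted kernel $g=M\varphi$, which makes the telescoping more transparent; incidentally, your indicator $\one_{[r>\alpha(t)]}$ is the one consistent with \eqref{eq:gamma}, whereas the paper's final display (and the case distinction in Lemma \ref{u=0}) writes $\one_{[\alpha(t)>r]}$, an apparent typo.
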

\begin{proof}
$\bullet$ Recall from Lemma \ref{lem:repr} that $\gamma$ can be written as 
\[
\int_0^{T-t} \int_{\alpha(u+t)}^\infty M(u,r)\cdot \big(1-M(T-u-t,z)\big)\cdot \varphi(z,u,r)\md z\md u 
\]
with $M$ given in \eqref{U} and $\varphi$ given in \eqref{phi}. Lemma \ref{u=0} yields $\gamma\in \mathcal C^{1,2}([0,T)\times \R)$. 
\medskip
\\$\bullet$ It is straightforward to build derivatives of $\varphi$ and show that
\[
\varphi_t(z,t,r)-a\big\{ b-r-\frac{\delta^2}{a^2} (1-e^{-at})\big\}\varphi_r(z,t,r)-\frac{\delta^2}2\varphi_{rr}(z,t,r)=0\;.
\]
$\bullet$ Recall that $M(u,r)$ solves Differential equation \eqref{diffF}.\medskip
\\$\bullet$ Using $M(0,r)=M(T-t,\alpha(t))=1$ and $M_r(u,r)=(1-e^{-au})/a$, we conclude
\begin{align*}
&&\gamma_t(t,r)&= \int_0^{T-t} M(u,r) \int_{\alpha(u+t)}^\infty \varphi(z,u,r)\cdot M_t(T-u-t,z)\md z\md u\;,
\\&& \gamma_r(t,r)&= \int_0^{T-t} M_r(u,r)\int_{\alpha(u+t)}^\infty \varphi(z,u,r)\cdot (1-M(T-u-t,z))\md z\md u
\\&&&\quad {}+\int_0^{T-t} M(u,r)\int_{\alpha(u+t)}^\infty \varphi_r(z,u,r)\cdot (1-M(T-u-t,z))\md z\md u\;,
\\&& \gamma_{rr}(t,r)&= \int_0^{T-t} M_{rr}(u,r)\int_{\alpha(u+t)}^\infty \varphi(z,u,r)\cdot (1-M(T-u-t,z))\md z\md u
\\&&&\quad {}-2\int_0^{T-t} M(u,r)\int_{\alpha(u+t)}^\infty \frac{1-e^{-au}}a\varphi_r(z,u,r)\cdot (1-M(T-u-t,z))\md z\md u
\\&&&\quad {}+\int_0^{T-t} M(u,r)\int_{\alpha(u+t)}^\infty \varphi_{rr}(z,u,r)\cdot (1-M(T-u-t,z))\md z\md u\;.
\end{align*}
$\bullet$ Note that $M_t(T-u-t,z)=M_u(T-u-t,z)$ and $\mathcal L(M)(u,r)=0$. Therefore, we can conclude that
\small
\begin{align*}
\mathcal L(\gamma)(t,r)&= \int_0^{T-t} M(u,r) \int_{\alpha(u+t)}^\infty \varphi(z,u,r)\cdot M_u(T-u-t,z)\md z\md u
\\&\quad {}+\int_0^{T-t} \mathcal L(M)(u,r)+M_u(u,r) \int_{\alpha(u+t)}^\infty \varphi(z,u,r)\cdot \big(1-M(T-u-t,z)\big)\md z\md u
\\&\quad {}+\int_0^{T-t} M(u,r) \int_{\alpha(u+t)}^\infty \Big(a\big\{ b-r-\frac{\delta^2}{a^2} (1-e^{-at})\big\}\varphi_r(z,t,r)+\frac{\delta^2}2\varphi_{rr}(z,t,r)\Big) 
\\& \hspace{7.5cm}\times \big(1-M(T-u-t,z)\big)\md z\md u
\\& = \int_0^{T-t} \partial_u\bigg( M(u,r) \int_{\alpha(u+t)}^\infty \varphi(z,u,r)\cdot \big(1-M(T-u-t,z)\big)\md z\bigg)\md u\;.
\end{align*}
\normalsize
Consequently, we can get rid of the $\md u$-integral and get
\begin{align*}
\mathcal L(\gamma)(t,r)&= M(T-t,r) \int_{\alpha(T)}^\infty \varphi(z,T-t,r)\cdot \big(1-M(0,z)\big)\md z
\\&\quad{}-M(0,r)\lim\limits_{u\to 0} \int_{\alpha(t+u)}^\infty \varphi(z,u,r)\cdot \big(1-M(T-t-u,z)\big)\md z\;.
\end{align*}
The proof of Lemma \ref{u=0} gives
\[
\mathcal L(\gamma)(t,r)=-\Big(1-M(T-t,r)\Big)\one_{[\alpha(t)>r]}\;,
\]
which corresponds to Differential equation \eqref{eq:gamma}.
\end{proof}
Now, we are ready to prove the verification theorem.
\begin{Theorem}[Verification Theorem]
The function $v=xM(T-t,r)+\tilde G(t,r)$, with $M$ given in \eqref{U} and 
\begin{align*}
\tilde G(t,r)&=\mu (T-t) M(T-t,r)
\\&\quad{}+ \xi\int_0^{T-t} \int_{\alpha(u+t)}^\infty M(u,r)\cdot \big(1-M(T-u-t,z)\big)\cdot \varphi(z,u,r)\md z\md u 
\end{align*}
with $\varphi$ in \eqref{phi} and $\alpha$ in \eqref{alpha}, is the value function, solves the HJB equation \eqref{hjb:1}. The optimal strategy is given by $c^*=\{c^*_s\}$ with $c^*_s=\xi\one_{[r_s>\alpha(s)]}$ 
\end{Theorem}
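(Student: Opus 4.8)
The statement is a classical verification theorem, and the plan is to follow the standard two-step scheme: first confirm that the candidate $v=xM(T-t,r)+\tilde G(t,r)$ is a $\mathcal C^{1,2}$ solution of the HJB equation \eqref{hjb:1} satisfying the terminal condition $v(T,r,x)=x$, and then use an \Ito-based comparison argument to sandwich the value function. The crucial structural observation is that, by the very definition of $\tilde G$ in \eqref{defG}, $v$ already \emph{equals} the return function $V^{c^*}$ of the candidate strategy $c^*_s=\xi\one_{[r_s>\alpha(s)]}$; hence the only inequality requiring genuine work is $v\ge V^c$ for every admissible $c$.

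\textbf{Step 1 (HJB and smoothness).} Since $v$ is affine in $x$, one has $v_x=M(T-t,r)$ and $v_{xx}=0$. Inserting the derivatives of $v$ into \eqref{hjb:1} and grouping the terms linear in $x$, the differential equation \eqref{diffF} for $M$ makes the entire $x$-coefficient vanish, so the left-hand side of \eqref{hjb:1} collapses to
\[
\mathcal L(\tilde G)(t,r)+\mu M(T-t,r)+\sup_{0\le c\le\xi}c\{1-M(T-t,r)\}\;.
\]
The supremum is elementary: by the separation of $S_1$ and $S_2$ in \eqref{S} one has $1-M(T-t,r)>0$ exactly on $\{r>\alpha(t)\}$, so the supremum equals $\xi\one_{[r>\alpha(t)]}(1-M(T-t,r))$. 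This reduces \eqref{hjb:1} precisely to \eqref{tildeG}, which holds by Remark \ref{rem:M} together with Lemma \ref{lem:diff}. The terminal condition follows from $M(0,r)=1$ and $\tilde G(T,r)=0$, giving $v(T,r,x)=x$. Finally, $M$ is smooth and $\gamma\in\mathcal C^{1,2}([0,T)\times\R)$ by Lemma \ref{u=0}, so $v\in\mathcal C^{1,2}([0,T)\times\R\times\R)$ and \Ito's formula is applicable.

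\textbf{Step 2 (verification inequality).} Fix an admissible $c$ and apply \Ito's formula to $s\mapsto e^{-U_t^s}v(s,r_s,X^c_s)$. Because $\{W_t\}$ and $\{B_t\}$ are independent, no mixed second-order term appears, and the drift equals $e^{-U_t^s}$ times
\[
v_t+\mu v_x+\frac{\sigma^2}2v_{xx}+a(b-r_s)v_r+\frac{\delta^2}2v_{rr}-r_sv-c_sv_x\;.
\]
Using the HJB identity to replace the first six terms by $-\sup_{0\le c\le\xi}c\{1-v_x\}$, and adding and subtracting $c_s$, the drift becomes
\[
e^{-U_t^s}\Big(c_s\{1-v_x\}-\sup_{0\le c\le\xi}c\{1-v_x\}-c_s\Big)\le -e^{-U_t^s}c_s\;,
\]
the inequality being an equality exactly when $c_s$ attains the pointwise supremum, i.e.\ for $c=c^*$. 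Integrating from $t$ to $T$, taking $\mE_{t,r,x}$ so that the stochastic integrals vanish, and invoking $v(T,r,x)=x$ yields $v(t,r,x)\ge V^c(t,r,x)$, hence $v\ge V$. Since $v=V^{c^*}\le V$ by \eqref{defG}, we conclude $v=V$ and that $c^*$ is optimal.

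\textbf{Main obstacle.} The delicate point is the integrability that turns the $\md B$- and $\md W$-stochastic integrals into genuine martingales and justifies the passage to the limit $s\uparrow T$, where $\gamma$ is only known to be $\mathcal C^{1,2}$ on $[0,T)$. I would localize with stopping times $\tau_n$, apply the drift estimate up to $s\wedge\tau_n$, and then pass to the limit by dominated convergence. The required second moments are controlled by combining the explicit exponential--quadratic form of $M$ and its $r$-derivatives with the Gaussian law of $r_s$ and the moment computation \eqref{U}, while admissibility keeps $0\le c_s\le\xi$ bounded and $X^c_s$ of at most linear growth. This localization-and-dominated-convergence bookkeeping, rather than any conceptual difficulty, is the technical heart of the argument.
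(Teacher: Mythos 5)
Your proposal is correct and follows essentially the same route as the paper's proof: verify that $v$ solves HJB equation \eqref{hjb:1} via Remark \ref{rem:M} and Lemma \ref{lem:diff}, then apply It\^{o}'s formula to $e^{-U_t^s}v(s,r_s,X^c_s)$, use the martingale property of the stochastic integrals and the HJB inequality to obtain $V^c\le v$ for every admissible $c$, and close the sandwich with the observation that $v=V^{c^*}$ by construction \eqref{defG}. Your explicit handling of the supremum reduction and of the localization/limit $s\uparrow T$ is in fact more careful than the paper, which asserts the martingale property via It\^{o} isometry and leaves the identity $v=V^{c^*}$ implicit.
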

\begin{proof}
Remark \ref{rem:M} and Lemma \ref{lem:diff} prove that $v$ solves HJB equation \eqref{hjb:1}.
\\
Let $c$ be an arbitrary admissible strategy. Then, using Ito's formula one has
\begin{align*}
e^{-U_{t}^s}v(t,r_t,X^c_t)&=v(s,r,x)+\int_s^t e^{-U_{s}^y}\big\{\mathcal L(v)(y,r_y)+(\mu-c_y)M(T-y,r_y)\big\}\md y
\\&\quad{}+ \delta \int_s^t e^{-U_{s}^y} v_{r}\md B_y+\sigma \int_s^t e^{-U_{s}^y} v_{x}\md W_y\;.
\end{align*}
Since the stochastic integrals are martingales (due to {\Ito} isometry) with expectation zero, and $v$ solves HJB equation \eqref{hjb:1}, we can conclude 
\begin{align*}
\mE_{s,r,x}\big[e^{-U_{s}^t}v(t,r_t,X^c_t)\big]&\le v(s,r,x) -\mE_{s,r,x}\Big[\int_s^t e^{-U{s}^y}c_y\md y\Big]\;.
\end{align*}
Letting $t\to T$ yields then 
\[
\mE_{s,r,x}\big[e^{-U_{s}^T}X^c_T\big]+\mE_{s,r,x}\Big[\int_s^T e^{-U_{s}^y}c_y\md y\Big]\le v(s,r,x)\;,
\]
which proves our claim.
\end{proof}
We conclude that the optimal strategy does not depend on the surplus. This is due to the fact that we do not stop our considerations at the time of ruin, i.e.\ at the time when the surplus hits zero. The penalty for having a negative surplus is reflected only in the expected lump sum payment at $T$, which is given by $x+\mu T-\int_0^T c_s\md s$. 
\\The decision to pay or to wait is a feedback strategy of a current interest rate. Given the curve $\alpha$ defined in \eqref{alpha}, if the interest rate at time $t$ lies above $\alpha(t)$, it is optimal to pay on the maximal possible rate and not to pay otherwise. 
\smallskip
\\The economic interpretation is as follows. Assumption $\tilde b>\frac{\delta^2}{2a^2}$ implicates that $\alpha(t)\le 0$, and the mean-reverting OU process being below zero will be pushed up. If the interest rate is below $\alpha(t)$, i.e.\ negative, then the expected discounting factor is increasing in time. 
\\Thus, if the interest rate stays under $\alpha(t)$ the discounting factor attains its maximum at $T$. Since, $\mu t>\mu-\int_0^T c_s\md s$ for any strategy $c=\{c_s\}$, it is not surprising that for $r_s<\alpha(s)$ one should not pay dividends until $T$.   
\\On the other hand, if the interest rate $r_t$ lies above the curve $\alpha(t)$, excursions of $r_t$ into the positive half-line will lead to an decreasing in time discounting factor. Therefore, one would be rather willing to pay immediately on the maximal rate than to wait until $T$.

\subsection{BSDE approach}\label{sec:bsdesec1}


In this section we will tackle the problem of finding the optimal strategy $c$ for
\[ V^c(t,r,x)=\mathbb{E}_{t,r,x}\left[\int_t^T e^{-U_t^s}c_s\ds+e^{-U_t^T}X^c_T\right]\]
by an ansatz using a generalized Hamiltonian and resulting coupled forward-backward SDEs (FBSDEs), elaborated for example in \cite[Section 4.2]{carmona}, \cite[Section 6.4.2]{pham} and \cite[Section 10.1.1]{touzi}.
Note that the dynamics of the process $X^c$ is
$$dX^c_s =(\mu-c_s)\ds+\sigma \dW_s+ 0\dB_s\;,\text{ for }s\geq t,\text{ and } X^c_t=x\;.$$
Therefore, the generalized Hamiltonian $H\colon\Omega\times[0,T]\times\R\times\R\times\R^{1\times 2}\to\R$ takes the form
\[H(s,\mathtt{x},c,y,z)=(\mu-c)y+\mathrm{tr}\left(\begin{pmatrix}
	\sigma & 0
\end{pmatrix}^T z\right)+e^{-U_t^s}c
\]
and actually does not depend on $\mathtt{x}$.
The function $g\colon \Omega\times\R\to\R$ is given by 
\[
g(\mathtt{x})=e^{-\int_t^Tr_u \du}\mathtt{x} = e^{-U_t^T} \mathtt{x}.
\]
Both ${H}$ and $g$ are affine in $(\mathtt{x},c)$ resp. $\mathtt{x}$, hence they are concave. The maximum can be achieved by setting $c$ to
\begin{align}\label{eq: cstrat}\hat c=\arg\max_cH(s,\mathtt{x},c,y,z)=\begin{cases}\xi,& e^{-U_t^s}-y>0, \\0, & e^{-U_t^s}-y\leq 0. \end{cases}\end{align}
With the above expressions we are able to put up the corresponding BSDE, 
\[Y_s=
\partial_{\mathtt{x}}
g(X^{\hat c}_T)+\int_s^T
\partial_{\mathtt{x}}
H(u,X^{\hat c}_u,\hat c_u,Y_u,Z_u)\du-\int_s^TZ_u\begin{pmatrix}
	\dW_u\\\dB_u
\end{pmatrix},\quad t\leq s\leq T,\]
taking a particularly easy form without generator here:
\[Y_s=e^{-U_t^T}-\int_s^TZ_{u,1}\dW_u-\int_s^TZ_{u,2}\dB_u\;.\]
As this BSDE's terminal condition only depends on $B$, it follows that $Z_{u,1}=0$ and that the solution $Y$ is given by $Y_s=\E_{t,r,x}\left[ e^{-\int_t^Tr_u \du}\middle|\mathcal{F}^B_s\right]=\E_{t,r,x}\left[ e^{-U_s^T}\middle|\mathcal{F}^B_s\right]$, which we will calculate in the following. Note that 
\[Y_s=\E_{t,r,x}\left[ e^{-\int_t^Tr_u \du}\middle|\mathcal{F}^B_s\right]=e^{-\int_t^sr_u \du}\E_{t,r,x}\left[ e^{-\int_s^Tr_u \du}\middle|\mathcal{F}^B_s\right]=e^{-U_t^s}\E_{t,r,x}\left[ e^{-U_s^T}\middle|\mathcal{F}^B_s\right]\]
By the SDE for the $r$ process, we may write 
\begin{align}\label{eq: SDEr}
	\int_s^Tr_v \dv = \frac{r_s-r_T}{a}+\frac{\delta}{a}(B_T-B_s)+b(T-s)\;,\end{align}
and can hence compute
\begin{align}\label{eq: condexplicit}
	&\E_{t,r,x}\left[ e^{-U_s^T}\middle|\mathcal{F}^B_s\right]=\E_{t,r,x}\left[e^{-\int_s^Tr_u \du}\middle|\mathcal{F}^B_s\right]=\E_{t,r,x}\left[e^{\frac{r_T-r_s}{a}-\frac{\delta}{a}(B_T-B_s)-b(T-s)}\middle|\mathcal{F}^B_s\right]\nonumber\\
	&=e^{-\frac{r_s}{a}-b(T-s)}\E_{t,r,x}\left[e^{\frac{r_T}{a}-\frac{\delta}{a}(B_T-B_s)}\middle|\mathcal{F}^B_s\right]=e^{-\frac{r_s}{a}-b(T-s)}\E_{t,r,x}\left[e^{\frac{r_T}{a}-\frac{\delta}{a}(B_T-B_s)}\middle|\mathcal{F}^B_s\right]\nonumber\\
	&=e^{-\frac{r_s}{a}-b(T-s)}\E_{t,r,x}\left[e^{\frac{1}{a}\left(r_se^{-a(T-s)}+b\left(1-e^{-a(T-s)}\right)+\delta e^{-aT}\int_s^Te^{au}\dB_u\right)-\frac{\delta}{a}(B_T-B_s)}\middle|\mathcal{F}^B_s\right]\nonumber\\
	&=e^{-\frac{r_s}{a}\left(1-e^{-a(T-s)}\right)-b(T-s)+\frac{b}{a}\left(1-e^{-a(T-s)}\right)}\E_{t,r,x}\left[e^{\frac{\delta}{a}\int_s^T(e^{a(u-T)}-1)\dB_u}\middle|\mathcal{F}^B_s\right]\nonumber\\
	&=e^{-\frac{r_s}{a}\left(1-e^{-a(T-s)}\right)-b(T-s)+\frac{b}{a}\left(1-e^{-a(T-s)}\right)}\E_{t,r,x}\left[e^{\frac{\delta}{a}\int_s^T(e^{a(u-T)}-1)\dB_u}\right]\nonumber\\
	&=e^{-\frac{r_s}{a}\left(1-e^{-a(T-s)}\right)-b(T-s)+\frac{b}{a}\left(1-e^{-a(T-s)}\right)+\frac{\delta^2}{2a^2}\int_s^T(1-e^{a(u-T)})^2\du}\;,
\end{align} 
where we used the explicit form of $r_T$, independent increments of Wiener integrals and the expression for the mean of a log-normal distribution. We conclude 
\begin{align*}Y_s&=e^{-\int_t^sr_u \du}e^{-\frac{r_s}{a}\left(1-e^{-a(T-s)}\right)-b(T-s)+\frac{b}{a}\left(1-e^{-a(T-s)}\right)+\frac{\delta^2}{2a^2}\int_s^T(1-e^{a(u-T)})^2\du}\\
&=e^{-\int_t^sr_u \du}e^{-\tilde b(T-s)+\frac{\tilde b-r_s-\frac{\delta^2}{2a^2}}a\big(1-e^{-a(T-s)}\big)+\frac{\delta^2}{4a^3}\big(1-e^{-2a(T-s)}\big)}\\
&=e^{-U_t^s}M(T-s,r_s)\;,
\end{align*}
with $M$ from \eqref{U}.
Using this explicit expression and \eqref{eq: cstrat} we obtain an optimal strategy $\hat{c}$ by setting for $s\geq t$,
$$\hat{c}_s=\begin{cases}\xi\;,& e^{-U_t^s}-Y_s>0, \\0, & e^{-U_t^s}-Y_s\leq 0\;. \end{cases}$$
Inserting for $Y_s$ at the barrier $e^{-U_t^s}-Y_s=0$, we see that it can be reduced to
\begin{align*}
e^{-U_t^s}-Y_s &=0 \iff\\
	\quad e^{-\int_t^sr_u du}-e^{\int_t^sr_u du}\E_{t,r,x}\left[e^{-\int_s^Tr_u du}\middle|\mathcal{F}^B_s\right]&=0 \iff \\
	\quad \E_{t,r,x}\left[e^{-\int_s^Tr_u du}\middle|\mathcal{F}^B_s\right]&=1  \iff \\
	\quad \E_{t,r,x}\left[e^{-U_s^T}\middle|\mathcal{F}^B_s\right] &= 1\;,
\end{align*}
coinciding with the results from Section 2.
Using the explicit form of the conditional expectation and taking logarithms, the above equality yields
\begin{align*}
	&0={-\tilde b(T-s)+\frac{\tilde b-r_s-\frac{\delta^2}{2a^2}}a\big(1-e^{-a(T-s)}\big)+\frac{\delta^2}{4a^3}\big(1-e^{-2a(T-s)}\big)}
\end{align*}
From there, it is easy to infer the same barrier curve $\alpha$ from \eqref{alpha}.

A numerical illustration of the interest rate compared to the curve $\alpha$, separating the strategy areas, is provided in Figure \ref{fig:num}.

\begin{figure}[t]
	\centering
	\includegraphics[scale=0.5]{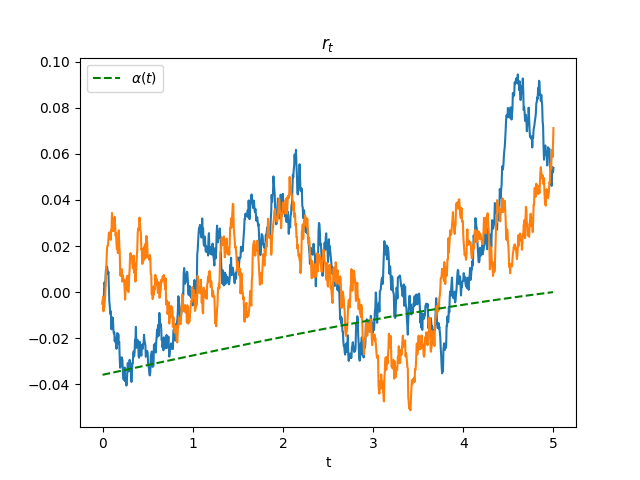}
	\caption{Two paths of $r_t$ in view of $\alpha(t)$.}
	\label{fig:num}
\end{figure}

\section{Dividend Maximization with an Exogenous Stochastic Time Horizon}\label{sec:stochastic_time}
In this section, we consider again an insurance company whose surplus is given by a Brownian motion with drift $X_t=x+\mu t+\sigma W_t$, where $\{W_t\}$ is a standard Brownian motion on a probability space $\left(\Omega,\mathcal{F},\mathbb{P}\right)$. The insurance company is paying out dividends, where the accumulated dividends until $t$ are given by $C_t=\int_0^t c_s\md s$.
\\The discounting rate is given by an Ornstein-Uhlenbeck process with the dynamics 
($a,\delta>0$, $b\in\R$):
\[
r_t=re^{-a t}+b(1-e^{-a t})+\delta e^{-at}\int_0^t e^{au}\md B_u\;,
\] 
The problem considered in Section \ref{sec:deterministic_time} is now modified by changing the time horizon. Usually, one assumes that the time horizon depends on the chosen dividend strategy. However, this approach does not seem to be entirely realistic. The analysts of the insurance company under consideration may fix the life time of the company as the ruin time corresponding to the dividend payout strategy with a fixed constant rate, say $\zeta\le \mu$. We allow just for the strategies $c=\{c_s\}$ with accumulated dividends given by $C_t=\int_0^tc_s\md s$ and $c_s\in[0,\xi]$ for some given and fixed $\xi>0$. Such a  strategy is called admissible if additionally $c$ is adapted to $\{\mF_t\}$. The set of admissible strategies will be denoted by $\mathfrak A$. 
\medskip
\\
As in Section \ref{sec:deterministic_time}, we will use $\tilde b=b-\frac{\delta^2}{2a^2}$,
\begin{align*}
	U_s:= U_0^s = \int_0^s r_u\md u\;.
\end{align*}
and recall
\begin{align*}
	M(s,r)=\mE_r[e^{-U_s}]=e^{-\tilde b s}\exp\Big\{\frac{\tilde  b-r}{a}  (1-e^{-as})-\frac{\delta^2}{4a^3}(1-e^{-as})^2\Big\}\;.
\end{align*}
In the following, we let $\md  X_t^{H,\zeta}:= (\mu-\zeta)\md t+\sigma \md W_t$, where $H$ indicates that a company whose surplus' drift exceeds $\mu-\zeta$ is considered as financially healthy. Further, $\zeta\in[0,\mu]$ is fixed and:
\begin{align*}
	&\tau_\zeta:=\inf\{t\ge 0:\ X_t^{H,\zeta}=0\}\;.
\end{align*}

\subsection{HJB approach}
For the dynamic programming principle to work we need to introduce an auxiliary process 
\begin{align*}
&L_t=-l-\zeta t\quad \mbox{and}\quad L_t^c=-l-\zeta t+\int_0^t c_s\md s\;.
\end{align*}
The process $L$ describes the difference of $X^{H,\zeta}$ and an ex-dividend process $X^c$. The initial value $l$ describes the historical  difference existing at time $0$, i.e.\ $X^{H,\zeta}_0=x-l$, $X_0=x$. In particular, for an admissible strategy $c=\{c_s\}$ it holds that
\[
X^c_{\tau_\zeta}=x+\int_0^{\tau_\zeta} \mu-c_s\md s+\sigma W_{\tau_\zeta}= \int_0^{\tau_\zeta}\zeta-c_s\md s+l=-L^c_{\tau_\zeta}\;.
\]
And, we define the target functional as
\begin{align*}
V^c(r,x,l)&:=\mE_{r,x,l}\Big[\int_0^{\tau_\zeta}e^{-U_s}c_s \md s+e^{-U_{\tau_\zeta\wedge \tau_c}}X_{\tau_\zeta }^c\Big]
\\&=\mE_{r,x,l}\left[\int_0^{\tau_\zeta }e^{-U_s}c_s \md s+e^{-U_{\tau_\zeta}}\left(l+\int_0^{\tau_\zeta}\zeta-c_s\md s\right)\right]
\\&=\mE_{r,x,l}\Big[\int_0^{\tau_\zeta }e^{-U_s}c_s \md s-e^{-U_{\tau_\zeta}}L^c_{\tau_\zeta}\Big]\;.
\end{align*}   
To ensure that the value function is well-defined we again require Assumption \ref{assumption}, i.e.\ $\tilde b>0$.  
Otherwise, by using, for instance, the constant strategy $c_s=\zeta\le \xi$ and noting that $\mP_x[\tau_\zeta =\infty]>0$ as $\zeta\le \mu$, one would get for $x>0$:
\begin{align*}
V^\zeta(r,x,l)&=\mE_{r,x,l}\Big[\int_0^{\tau_\zeta}e^{-U_s}\zeta \md s+e^{-U_{\tau_\zeta}}X_{\tau_\zeta}^\zeta\Big]
=\mE_{r,x,l}\Big[\int_0^{\tau_\zeta}e^{-\tilde b s}M(s,r)\zeta \md s\Big]=\infty\;.
\end{align*}
We are searching for the value function 
\[
V(r,x,l):=\sup\limits_{c\in\mathfrak A} V^c(r,x,l)\;.
\]
Using standard dynamic programming arguments, see for example \cite[p.\ 98]{schmidli} one can heuristically derive the following HJB equation: 
\begin{align*}
\mu V_x+\frac{\sigma^2}2V_{xx}+a(b-r)V_r+\frac{\delta^2}2 V_{rr}-r V-\zeta V_l+\sup\limits_{0\le c\le\xi}c\big(1-V_x+V_l\big)=0\;.
\end{align*}
We conjecture again that the optimal strategy is of a barrier type. In order to get an idea about the desired barrier, we consider the differential quotient of the value function with respect to $x$. For this purpose, let $h>0$ be very small and $c$ be an $\varepsilon$ admissible $(r,x+h,l+h)$-strategy, i.e.\
\[
V^c(r,x+h,l+h)+\varepsilon\ge  V(r,x+h,l+h)\;.
\]
Then, $c$ is also an admissible strategy for $(r,x,l)$. With a slight abuse of notation, we write $\tau_\zeta^{x-l}$ to indicate the starting value of the underlying process $X^{H,\zeta}$, one gets $\tau_{\zeta}^{x-l}=\tau_\zeta^{x+h-l-h}$ a.s. Therefore, we can conclude 
\begin{align*}
V(r,x+h,l+h)-V(r,x,l)&\le V^c(r,x+h,l+h)+\varepsilon -V^c(r,x,l)
\\&=h\mE_r\left[\exptauzeta\right]+\varepsilon\;. 
\end{align*}    
On the other hand, if $\tilde C$ is an $\varepsilon$ strategy for $(r,x,l)$ then it is also admissible for $(r,x+h,l+h)$, meaning that
\begin{align*}
V(r,x+h,l+h)-V(r,x,l)&\ge V^{\tilde c}(r,x+h,l+h)-\varepsilon -V^{\tilde c}(r,x,l)
\\&=h\mE_r\left[\exptauzeta\right]-\varepsilon\;. 
\end{align*} 
Since $\varepsilon$ was arbitrary, we can conclude that 
\[
\lim\limits_{h\to 0}\frac{V(r,x+h,l-h)-V(r,x,l)}{h}=\mE_r\left[\exptauzeta\right]\;.
\]
In particular, if the value function $V$ is differentiable with respect to $x$ and $l$, then one gets
\[
V_x(r,x,l)-V_l(r,x,l)=\mE_r\left[\exptauzeta\right]\;.
\]
The stopping time $\tau_\zeta^{x-l}$ is independent of $B$, and the distribution function of $\tau_\zeta^{x-l}$ is well-known, see \cite[p.\  295]{BorSa}. It means the expression $\mE_r\left[\exptauzeta\right]$ can be explicitly calculated, at least as a power series. 
\begin{Remark}
Following the path of Section \ref{sec:deterministic_time}, one should first consider $\mE_r\left[\exptauzeta\right]$ and find, if the case maybe, a curve $\theta(r,l)\neq 0$ such that $\mE_r\left[\exp\left(-U_{\tau_\zeta^{\theta(r,l)-l}}\right)\right]\equiv 1$ for all $(r,l) \in \R^2$.
\\In the second step, one calculates the return function corresponding to the strategy $c_s=\xi \one_{[x>\theta(r_s,L_s)]}$.
\\Then, if the regularity conditions are fulfilled, one can check whether the function solves the HJB equation.  
\end{Remark}
\subsubsection{Properties of $\eexptauzetax$}
Because $W$ and $B$ are independent, we can define
\begin{align*}
\phi(r,z):=&\eexptauzetax=\mE_{r}[M(\tau_\zeta^z,r)]\\
=&\mE\Big[e^{-\tilde b s}\exp\Big\{\tfrac{\tilde  b-r}{a}  (1-e^{-a \tau_\zeta^z})-\tfrac{\delta^2}{4a^3}(1-e^{-a\tau_\zeta^z})^2\Big\}\Big]\;.
\end{align*}
\noindent
Note that for $r\ge 0$ the function $M(t,r)$ is strictly decreasing in $t$. Since $\tau_\zeta^z$ is strictly increasing in $z$, we conclude that $\phi(r,z)$ is strictly decreasing in $z$, yielding
\[
\phi(r,z)< \phi(r,0)=1
\]
for $(r,z)\in\R^2$.
\\This means, in particular, that the desired curve lies in $\{r<0\}$.
\noindent
In the next section we consider the case $r<0$. 
\subsubsection{The case $r<0$}
For the sake of clarity, we itemize the properties of the function $\phi$ below if $r<0$.\bigskip
\\
$\bullet$ It is easy to see that
\begin{align*}
\phi(r,0)=1 \quad \mbox{and}\quad \lim\limits_{z\to \infty}\phi(r,z) =0\;.
\end{align*}
$\bullet$ It is hard to derive the properties of the function $\phi$ from its expectation representation. 
To calculate the expectation, one can consider the function $M(r,t)$ first and write it as the power series
\begin{align*}
M(s,r)&=e^{-\tilde b s}\exp\Big\{\frac{\tilde b-r}a\big(1-e^{-as}\big)-\frac{\delta^2}{4a^3}\big(1-e^{-as}\big)^2\Big\}
\\&=e^{\frac{\tilde b-r-\frac{\delta^2}{4a^2}}a}\s_{n=0}^\infty e^{-(an+\tilde b)s}\s_{k=0}^{[n/2]}\frac{(-1)^{n-k}}{k!(n-2k)!}\Big(\frac{\tilde b-r-\frac{\delta^2}{2a^2}}a\Big)^{n-2k}\Big(\frac{\delta^2}{4a^3}\Big)^k 
\;,
\end{align*} 
From \cite[p.\ 295]{BorSa} we know that the density of $\tau_\zeta^z$ is given by
\begin{align}
f_{\tauzeta}(t) := \mP_z[\tau_\zeta\in\md t]=\frac{z}{\sqrt{2\pi}\sigma t^{3/2}}\exp\{-\frac{(z-(\mu-\zeta)t)^2}{2\sigma^2 t}\} \;.\label{density}
\end{align}
Moreover,
$$
\mP_z[\tau_\zeta = \infty] = 1-\exp\left(-\frac{(\mu-\zeta) z + |\mu-\zeta|z}{\sigma^2}\right)\;, \quad z \ge 0 \;.
$$
Letting
\[
\theta_n:=\frac{-\mu+\zeta-\sqrt{(\mu-\zeta)^2+2\sigma^2(an+\tilde b)}}{\sigma^2}\;,
\]
the power series representation of $\phi$ becomes
\begin{align*}
\phi(r,z)&=
 e^{\frac{\tilde b-r-\frac{\delta^2}{4a^2}}a}\s_{n=0}^\infty \mE_x\big[e^{-an\tau_\zeta^z-\tilde b \tau_\zeta^z}\big]\s_{k=0}^{[n/2]}\frac{(-1)^{n-k}}{k!(n-2k)!}\Big(\frac{\tilde b-r-\frac{\delta^2}{4a^2}}a\Big)^{n-2k}\Big(\frac{\delta^2}{4a^3}\Big)^k
\\&=e^{\frac{\tilde b-r-\frac{\delta^2}{4a^2}}a}\s_{n=0}^\infty e^{\theta_n z}\s_{k=0}^{[n/2]}\frac{(-1)^{n-k}}{k!(n-2k)!}\Big(\frac{\tilde b-r-\frac{\delta^2}{2a^2}}a\Big)^{n-2k}\Big(\frac{\delta^2}{4a^3}\Big)^k\;.
\end{align*}
Inserting $x-l$ instead of $z$ in the above expression, we get the condition specifying the curve $\theta$ (if such a curve exists):
\[
\phi(r,x-l)=e^{\frac{\tilde b-r-\frac{\delta^2}{4a^2}}a}\s_{n=0}^\infty e^{\theta_n (x-l)}\s_{k=0}^{[n/2]}\frac{(-1)^{n-k}}{k!(n-2k)!}\Big(\frac{\tilde b-r-\frac{\delta^2}{2a^2}}a\Big)^{n-2k}\Big(\frac{\delta^2}{4a^3}\Big)^k=1.
\]
The power series representation does not allow to show the existence and uniqueness of a curve $\theta\neq 0$ such that $\phi(r,x-l)> 1$ if $x>\theta(r,l)$ and $\phi(r,x-l)< 1$ if $x<\theta(r,l)$. We conclude that the approach consisting in finding a candidate return function and showing that this function solves the corresponding HJB equation cannot be applied here.
Although we will address this question in future research, in the present paper we will now tackle the problem using BSDEs. 

\subsection{BSDE approach}
Similar to the case of a deterministic time horizon, for an arbitrary strategy-independent $\left\{\mathcal{F}_t\right\}_{t\geq 0}$-stopping time $\tau$, we define the return function corresponding to some admissible strategy $c = \{c_s\}$ to be 
	\begin{align*}
		V^c(t,r,x)=&\mathbb{E}\left[\left(\int_{t\wedge \tau}^\tau e^{-U_{t\wedge\tau}^s}c_s\ds+e^{-U_{t\wedge\tau}^\tau}X^c_\tau\right) \one_{\{\tau <\infty\}} \middle|r_{t\wedge\tau}=r,X^c_{t\wedge\tau}=x.\right]\\
		&+\mathbb{E}\left[\left(\int_{t}^\infty e^{-U_{t}^s}c_s\ds\right)\one_{\{\tau=\infty\}} \middle|r_{t}=r,X^c_{t}=x\right],
	\end{align*}
	since in the event of $\left\{\tau=\infty\right\}$, the value $X^c_\infty$ is never reached. This is also consistent with the condition $\tilde{b}>0$ from Assumption \ref{assumption}, because then $$
	\lim_{T\to\infty}e^{-U_{t\wedge\tau}^T}X^c_T= 0\;.
	$$ Note that in the event $\{\tau<t\}$, any strategy $c$ comes to late since then $V^c(t,r,x)=x$ is already determined (and any $c$ is optimal, thus not interesting).	
	As $\tau$ does not depend on the strategy $c$, the problem above reduces to the case when $\tau<\infty$, as for the event $\left\{\tau=\infty\right\}$ it is clear that $c=\xi$ yields the best strategy. Hence, we have to find a strategy to optimize only
	\begin{align}\label{eq:optimize}
		\sup_{c\in \mathfrak{A}}\mathbb{E}\left[\left(\int_{t\wedge \tau}^\tau e^{-U_{t\wedge\tau}^s}c_s\ds+e^{-U_{t\wedge\tau}^\tau}X^c_\tau\right)\one_{\{\tau <\infty\}}\middle|r_{t\wedge\tau}=r,X^c_{t\wedge\tau}=x  \right]\;,\end{align}
	which we will tackle in the sequel using a BSDE approach.\bigskip
	
\noindent Within this section, for readability, we will use $\E[\ \cdot\ ]$ instead of $\E\left[\ \cdot\ \middle| r_{t\wedge\tau}=r,X^c_{t\wedge\tau}=x \right]$.\bigskip

\noindent The arguments of \cite[Section 4.2]{carmona}, \cite[Section 6.4.2]{pham} and \cite[Section 10.1.1]{touzi} may all be modified in an obvious way to use stopping times instead of deterministic ending times $T$. Like in Section \ref{sec:bsdesec1}, the solution for the optimal strategy can again be found via maximizing the generalized Hamiltonian 
\[
H(s,\mathtt{x},c,y,z)=(\mu-c)y+\mathrm{tr}\left(\begin{pmatrix}
	\sigma & 0
\end{pmatrix}^T z\right)+e^{-U_t^s}c\;,
\]
which one achieves by
\begin{align*}
	\hat c=\arg\max_cH(s,\mathtt{x},c,y,z)=\begin{cases}\xi,& e^{-U_{t\wedge\tau}^{s}}-y>0\;, \\0, & e^{-U_{t\wedge\tau}^{s}}-y\leq 0\;. \end{cases}
\end{align*}
We have to solve the corresponding BSDE for $Y$,
\[Y_s=e^{-U_{t\wedge\tau}^\tau}-\int_{s}^\tau Z_{u,1}\dW_u-\int_{s}^\tau Z_{u,2}\dB_u\;,\quad t\wedge\tau \leq s\leq\tau<\infty\;.\]
 For BSDEs with stopping times as time horizon see e.g. \cite[Section 3]{pardoux1998}, \cite{pardoux1999} or \cite[Section 5]{briand}. Again, in this very BSDE no generator appears and hence we end up with the conditional expectation $Y_s=\mathbb{E}\left[\exp\left(-\int_{t\wedge\tau}^\tau r_u \du\right)\middle|\mathcal{F}_s\right]=\mathbb{E}\left[\exp\left(-U_{t\wedge\tau}^\tau\right)\middle|\mathcal{F}_s\right]$, here with respect to the filtration $\left\{\mathcal{F}_s\right\}$ instead of $\{\mathcal{F}^B_s\}$. The reason is that, because of the presence of $\tau$, the terminal condition $e^{-U_{t\wedge\tau}^\tau}=e^{-\int_{t\wedge\tau}^\tau r_u \du}$ is not necessarily only $\mathcal{F}^B$-measurable. This is in particular the case for the stopping times $\tauzeta$ treated in the next section. \bigskip
 
With this solution for $Y$, it follows that if $t\wedge\tau\leq s\leq \tau<\infty$,
$$\hat{c}_s=\begin{cases}\xi\;,& e^{-U_{t\wedge\tau}^{s}}-Y_s>0, \\0\;, & e^{-U_{t\wedge\tau}^s}-Y_s\leq 0\;. \end{cases}$$
As before, to determine the barrier, we see that for $t\wedge\tau\leq s\leq \tau$, $$Y_{s}=\mathbb{E}\left[e^{-U_{t\wedge\tau}^\tau}\middle|\mathcal{F}_{s}\right]=\mathbb{E}\left[e^{-\int_{t\wedge\tau}^\tau r_u \du}\middle|\mathcal{F}_{s}\right]=e^{-\int_{t\wedge\tau}^{s} r_u \du}\mathbb{E}\left[e^{-\int_{s}^\tau r_u \du}\middle|\mathcal{F}_{s}\right],$$
and hence the barrier is attained if
\begin{align*}
e^{-\int_{t\wedge\tau}^sr_u \du}-e^{-\int_{t\wedge\tau}^{s} r_u \du}\mathbb{E}\left[e^{-\int_{s}^\tau r_u \du}\middle|\mathcal{F}_{s}\right]&=0\\
\Leftrightarrow\quad \mathbb{E}\left[e^{-\int_{s}^\tau r_u \du}\middle|\mathcal{F}_{s}\right]&=1\\
\Leftrightarrow\quad \mathbb{E}\left[e^{-U_{s}^\tau}\middle|\mathcal{F}_{s}\right]&=1\;.
\end{align*}

\subsubsection{Evaluating the strategy for the stopping times $\tauzeta$}

In the above subsection we found that the optimal strategy can be determined calculating the conditional expectation $\mathbb{E}\left[\exp\left(-U_{s}^\tau\right)\middle|\mathcal{F}_{s}\right] =
\mathbb{E}\left[\exp\left(-\int_{s}^{\tau} r_u \du\right)\middle|\mathcal{F}_{s}\right]$ 
on $\{t\leq s\leq\tau\}$. 
We will now do this explicitly for the stopping time $\tauzeta=\inf_{t \ge 0} \{ X_t^{H,\zeta} = 0 \}$.\bigskip 

Note that $X^{H,\zeta}$ is independent of $B$, and therefore $r$ and $X^{H,\zeta}$ are independent processes, and also $\tauzeta$ is independent of $r$. We will use this independence to show with $f_{\tauzeta}$, the density from \eqref{density}, we have
\begin{align}\label{eq:condexptau}
\mathbb{E}\left[e^{-\int_{s}^{\tauzeta} r_u \du}\middle|\mathcal{F}_{s}\right]\one_{\{\tauzeta>s\}}=\int_s^\infty \mathbb{E}\left[e^{-\int_{s}^{\tilde{t}} r_u \du}\middle|\mathcal{F}^B_{s}\right] f_{\tauzeta}(\tilde{t})\dtildet \; \one_{\{\tauzeta>s\}}\;.
\end{align}

To that end, first consider that $\mathcal{F}_s$ is generated by elements $A$ of the form $$
A=\left(B_{\cdot\wedge s}^{-1}(L_1)\cap W_{\cdot\wedge s}^{-1}(L_2)\right)\cup N\;,
$$
where $L_1,L_2$ are Borel subsets of $\mathcal{C}\left({[0,\infty)}\right)$, the space of continuous functions on $[0,\infty)$ endowed with the topology of uniform convergence on compacts, and $N\in \mathcal{N}$ is a null set. Our first observation is that for such $A$, 

\begin{align}\label{eq:condexpA}
&\mathbb{E}\left[\mathbb{E}\left[e^{-\int_{s}^{\tauzeta} r_u \du}\middle|\mathcal{F}_{s}\right]\one_{\{\tauzeta>s\}}\one_A\right]
=\mathbb{E}\left[\mathbb{E}\left[e^{-\int_{s}^{\tauzeta} r_u \du}\middle|\mathcal{F}_{s}\right]\one_{\{\tauzeta>s\}}\one_{B_{\cdot\wedge s}^{-1}(L_1)}\one_{W_{\cdot\wedge s}^{-1}(L_2)}\right]\nonumber\\
&=\E \left[ e^{-\int_{s}^{\tauzeta} r_u \du}\one_{\{\tauzeta>s\}}\one_{B_{\cdot\wedge s}^{-1}(L_1)}\one_{W_{\cdot\wedge s}^{-1}(L_2)}\right]\;,
\end{align}
where we used that $N$ is a null-set as well as the defining property of conditional expectation. We continue with 
\begin{align*}
&\E\left[ e^{-\int_{s}^{\tauzeta} r_u \du}\one_{\{\tauzeta>s\}}\one_{B_{\cdot\wedge s}^{-1}(L_1)}\one_{W_{\cdot\wedge s}^{-1}(L_2)}\right]=\\
&\quad\quad\int_s^\infty \E\left[e^{-\int_{s}^{\tilde{t}} r_u \du}\one_{B_{\cdot\wedge s}^{-1}(L_1)}\one_{W_{\cdot\wedge s}^{-1}(L_2)}\middle|\tauzeta=\tilde{t}\right] f_{\tauzeta}(\tilde{t})\dtildet\;,
\end{align*}
where we used regular conditional probabilities for the integrand, conditioning on $\tauzeta$. Now, using independence of $B$ and $r$ from $\tauzeta$, we get
\begin{align*}
&\int_s^\infty \E\left[e^{-\int_{s}^{\tilde{t}} r_u \du}\one_{B_{\cdot\wedge s}^{-1}(L_1)}\one_{W_{\cdot\wedge s}^{-1}(L_2)}\middle|\tauzeta=\tilde{t}\right] f_{\tauzeta}(\tilde{t})\dtildet\\
&=\int_s^\infty \E\left[e^{-\int_{s}^{\tilde{t}} r_u \du}\one_{B_{\cdot\wedge s}^{-1}(L_1)}\right]\E\left[\one_{W_{\cdot\wedge s}^{-1}(L_2)}\middle|\tauzeta=\tilde{t}\right] f_{\tauzeta}(\tilde{t})\dtildet\;.
\end{align*}
By definition of the conditional expectation with respect to $\mathcal{F}^B_s$, the last expression equals
\begin{align*}
\int_s^\infty \E\left[\E\left[e^{-\int_{s}^{\tilde{t}} r_u \du}\middle|\mathcal{F}^B_s\right]\one_{B_{\cdot\wedge s}^{-1}(L_1)}\right]\E\left[\one_{W_{\cdot\wedge s}^{-1}(L_2)}\middle|\tauzeta=\tilde{t}\right] f_{\tauzeta}(\tilde{t})\dtildet\;.
\end{align*}
From here, going back the same steps as before, and using that $\mathcal{F}^B_s\subseteq\mathcal{F}_s$, we find that the first term of \eqref{eq:condexpA} is given by
\begin{align*}
\mathbb{E}\left[\mathbb{E}\left[e^{-\int_{s}^{\tauzeta} r_u \du}\middle|\mathcal{F}^B_s\right]\one_{\{\tauzeta>s\}}\one_A\right], 
\end{align*}
from which we conclude, as $A$ was an arbitrary generator of $\mathcal{F}_s$, that 
$$
\mathbb{E}\left[e^{-\int_{s}^{\tauzeta} r_u \du}\middle|\mathcal{F}_{s}\right]\one_{\{\tauzeta>s\}}=\E\left[e^{-\int_{s}^{\tauzeta} r_u \du}\middle|\mathcal{F}^B_s\right]\one_{\{\tauzeta>s\}}\;.
$$
Performing now similar steps for some Borel set $L\subseteq \mathcal{C}({[0,\infty)})$ with the expectation
\begin{align*}
\E\left[\E\left[e^{-\int_{s}^{\tauzeta} r_u \du}\middle|\mathcal{F}^B_s\right]\one_{\{\tauzeta>s\}}\one_{B_{\cdot\wedge s}^{-1}(L)}\right],
\end{align*}
we get
\begin{align*}
\E \left[\E\left[e^{-\int_{s}^{\tauzeta} r_u \du}\middle|\mathcal{F}^B_s\right]\one_{\{\tauzeta>s\}}\one_{B_{\cdot\wedge s}^{-1}(L)}\right]=\int_s^\infty \E\left[\E\left[e^{-\int_{s}^{\tilde{t}} r_u \du}\middle|\mathcal{F}^B_s\right]\one_{B_{\cdot\wedge s}^{-1}(L)}\right]f_{\tauzeta}(\tilde{t})\dtildet\;,
\end{align*}
where we may exchange the integrals to end up with 
\begin{align*}
\E\left[\int_s^\infty \E\left[e^{-\int_{s}^{\tilde{t}} r_u \du}\middle|\mathcal{F}^B_s\right]f_{\tauzeta}(\tilde{t})\dtildet \; \one_{B_{\cdot\wedge s}^{-1}(L)}\right],
\end{align*}
from which \eqref{eq:condexptau}, i.e.
\begin{align*}
\mathbb{E}\left[e^{-\int_{s}^{\tauzeta} r_u \du}\middle|\mathcal{F}_{s}\right]\one_{\{\tauzeta>s\}}=\int_s^\infty\E\left[e^{-\int_{s}^{\tilde{t}} r_u \du}\middle|\mathcal{F}^B_s\right]f_{\tauzeta}(\tilde{t})\dtildet \; \one_{\{\tauzeta>s\}}
\end{align*}
follows. Applying \eqref{eq: condexplicit} to evaluate the conditional expectation in the last term, we get that
\begin{align}\label{eq:barrier_definer_bsde}
&\mathbb{E}\left[e^{-\int_{s}^{\tauzeta} r_u \du}\middle|\mathcal{F}_{s}\right]\one_{\{\tauzeta>s\}}\nonumber\\
&=\int_s^\infty e^{-\frac{r_s}{a}\left(1-e^{-a(\tilde{t}-s)}\right)-b(\tilde{t}-s)+\frac{b}{a}\left(1-e^{-a(\tilde{t}-s)}\right)+\frac{\delta^2}{2a^2}\int_s^{\tilde{t}}(1-e^{a(u-\tilde{t})})^2\du} f_{\tauzeta}(\tilde{t})\dtildet \; \one_{\{\tauzeta>s\}}\nonumber\\
&=\int_s^\infty M(\tilde{t}-s,r_s) f_{\tauzeta}(\tilde{t})\dtildet \; \one_{\{\tauzeta>s\}}
\end{align}
This expression \eqref{eq:barrier_definer_bsde} defines again the barrier, when being equal to 1.
In the special case of $s=t$, we get
\begin{align*}
&\int_t^\infty e^{-\frac{r}{a}\left(1-e^{-a(\tilde{t}-t)}\right)-b(\tilde{t}-t)+\frac{b}{a}\left(1-e^{-a(\tilde{t}-t)}\right)+\frac{\delta^2}{2a^2}\int_t^{\tilde{t}}(1-e^{a(u-\tilde{t})})^2\du} f_{\tauzeta}(\tilde{t})\dtildet \; \one_{\{\tauzeta>t\}}\\
&=\int_t^\infty M(\tilde{t}-t,r) f_{\tauzeta}(\tilde{t})\dtildet \; \one_{\{\tauzeta>t\}}=\left(M(\cdot,r)*(f_{\tauzeta}\cdot \one_{\{[t,\infty)\}})\right)\one_{\{\tauzeta>t\}}\;.
\end{align*}

\begin{figure*}[h]
	\centering
	\begin{subfigure}[h]{0.4\textwidth}
		\centering
		\scalebox{0.7}{
			\includegraphics[height=2.8in]{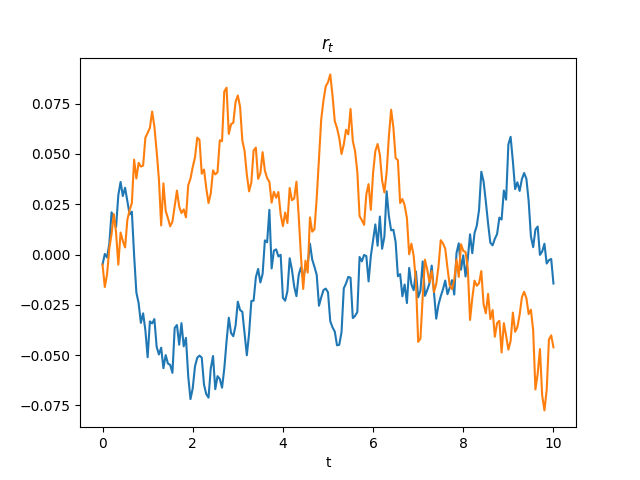}\label{fig:11}}
	\end{subfigure}%
	~ 
	\begin{subfigure}[h]{0.4\textwidth}
		\centering
		\scalebox{0.7}{
			\includegraphics[height=2.8in]{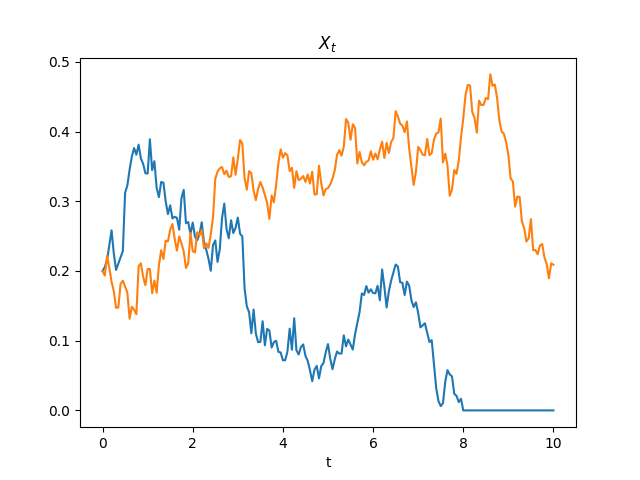}\label{fig:12}}
	\end{subfigure}
	\caption{Interest rate (left picture) and company wealth (right picture), where the blue path hit the stopping boundary.}
	\label{fig:rate_wealth}
\vspace*{\floatsep}
	\centering
	\begin{subfigure}[h]{0.4\textwidth}
		\centering
		\scalebox{0.7}{
			\includegraphics[height=2.8in]{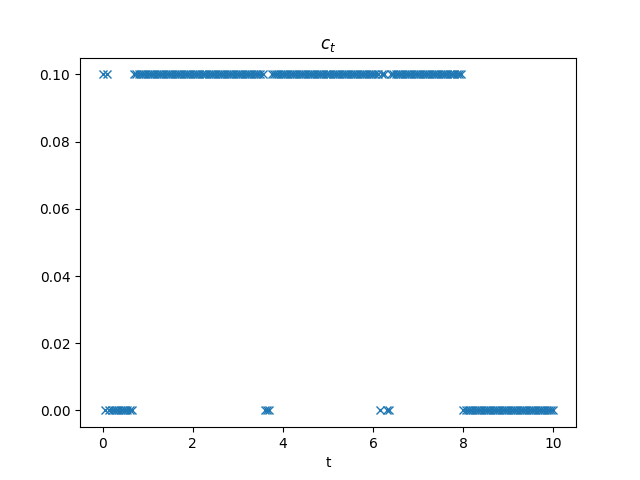}\label{fig:21}}
	\end{subfigure}%
	~ 
	\begin{subfigure}[h]{0.4\textwidth}
		\centering
		\scalebox{0.7}{
			\includegraphics[height=2.8in]{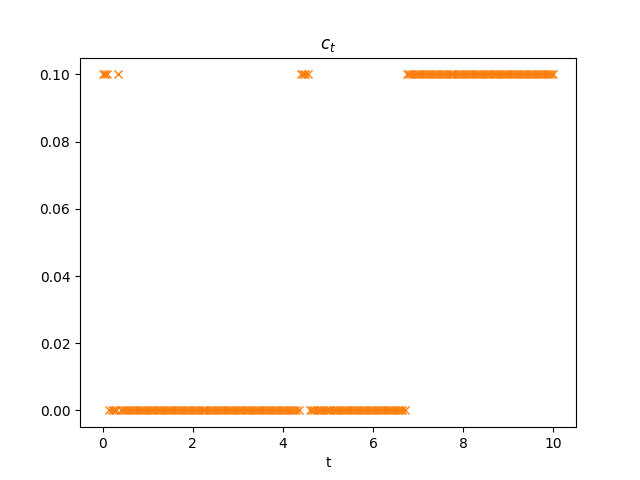}\label{fig:22}}
	\end{subfigure}
	\caption{The payout strategy $c_t$ for the paths of the surplus process in Figure \ref{fig:rate_wealth}.}
	\label{fig:strategy}
\end{figure*}

With the BSDE approach, we were able to find a solution to this control problem in form of the integral solution above. The exogenous stopping time now enters into the solution in form of the convolution $M(\cdot,r)*\left(f_{\tauzeta\cdot \one_{\{[t,\infty)\}}}\right)$. A numerical illustration can be found in Figure \ref{fig:rate_wealth} and Figure \ref{fig:strategy}.

\vspace{2pt} 



\newpage

\textbf{Acknowledgements}
\vspace{1em}

The research of Julia Eisenberg was funded by the Austrian Science
Fund (FWF), Project number V 603-N35.
\vspace{0.5em}

Stefan Kremsner was supported by the Austrian Science Fund (FWF): Project F5508-N26, which is part of the Special Research Program ''Quasi-Monte Carlo Methods: Theory and Applications''.



\begin{thebibliography}{999}
\bibitem{soner}
Erdin{\c{c}} Akyildirim, I~Ethem G{\"u}ney, Jean-Charles Rochet, and H~Mete
  Soner.
\newblock Optimal dividend policy with random interest rates.
\newblock {\em Journal of Mathematical Economics}, 51:93--101, 2014.

\bibitem{altho}
Hansj{\"o}rg Albrecher and Stefan Thonhauser.
\newblock Optimality results for dividend problems in insurance.
\newblock {\em RACSAM-Revista de la Real Academia de Ciencias Exactas, Fisicas
  y Naturales. Serie A. Matematicas}, 103(2):295--320, 2009.

\bibitem{asmussen}
S{\o}ren Asmussen and Michael Taksar.
\newblock Controlled diffusion models for optimal dividend pay-out.
\newblock {\em Insurance: Mathematics and Economics}, 20(1):1--15, 1997.

\bibitem{avanzi}
Benjamin Avanzi.
\newblock Strategies for dividend distribution: A review.
\newblock {\em North American Actuarial Journal}, 13(2):217--251, 2009.

\bibitem{azcue}
Pablo Azcue and Nora Muler.
\newblock Optimal reinsurance and dividend distribution policies in the
  cram{\'e}r-lundberg model.
\newblock {\em Mathematical Finance: An International Journal of Mathematics,
  Statistics and Financial Economics}, 15(2):261--308, 2005.

\bibitem{tiziano}
Elena Bandini, Tiziano De~Angelis, Giorgio Ferrari, and Fausto Gozzi.
\newblock Optimal dividend payout under stochastic discounting, 2021.

\bibitem{bismut1978introductory}
Jean-Michel Bismut.
\newblock An introductory approach to duality in optimal stochastic control.
\newblock {\em SIAM review}, 20(1):62--78, 1978.

\bibitem{BorSa}
Andrei~N Borodin and Paavo Salminen.
\newblock {\em Handbook of Brownian motion-facts and formulae}.
\newblock Springer Science \& Business Media, 2015.

\bibitem{briand}
Ph~Briand, Bernard Delyon, Ying Hu, Etienne Pardoux, and Lucretiu Stoica.
\newblock Lp solutions of backward stochastic differential equations.
\newblock {\em Stochastic Processes and their Applications}, 108(1):109--129,
  2003.

\bibitem{brigo2007interest}
Damiano Brigo and Fabio Mercurio.
\newblock {\em Interest rate models-theory and practice: with smile, inflation
  and credit}.
\newblock Springer Science \& Business Media, 2007.

\bibitem{carmona}
Ren{\'e} Carmona.
\newblock {\em Lectures on BSDEs, stochastic control, and stochastic
  differential games with financial applications}.
\newblock SIAM, 2016.

\bibitem{chessari2021numerical}
Jared Chessari and Reiichiro Kawai.
\newblock Numerical methods for backward stochastic differential equations: A
  survey.
\newblock {\em arXiv preprint arXiv:2101.08936}, 2021.

\bibitem{definetti}
Bruno De~Finetti.
\newblock Su un'impostazione alternativa della teoria collettiva del rischio.
\newblock {\em Transactions of the XVth International Congress of Actuaries},
  2:433--443, 1957.

\bibitem{eisenberg}
Julia Eisenberg.
\newblock Optimal dividends under a stochastic interest rate.
\newblock {\em Insurance: Mathematics and Economics}, 65:259--266, 2015.

\bibitem{el1997backward}
Nicole El~Karoui, Shige Peng, and Marie~Claire Quenez.
\newblock Backward stochastic differential equations in finance.
\newblock {\em Mathematical finance}, 7(1):1--71, 1997.

\bibitem{han2018solving}
Jiequn Han, Arnulf Jentzen, and E~Weinan.
\newblock Solving high-dimensional partial differential equations using deep
  learning.
\newblock {\em Proceedings of the National Academy of Sciences},
  115(34):8505--8510, 2018.

\bibitem{JiaPi}
Zhengjun Jiang and Martijn Pistorius.
\newblock Optimal dividend distribution under markov regime switching.
\newblock {\em Finance and Stochastics}, 16(3):449--476, 2012.

\bibitem{kohlmann2000relationship}
Michael Kohlmann and Xun~Yu Zhou.
\newblock Relationship between backward stochastic differential equations and
  stochastic controls: a linear-quadratic approach.
\newblock {\em SIAM Journal on Control and Optimization}, 38(5):1392--1407,
  2000.

\bibitem{kremsner2020deep}
Stefan Kremsner, Alexander Steinicke, and Michaela Sz{\"o}lgyenyi.
\newblock {A deep neural network algorithm for semilinear elliptic PDEs with
  applications in insurance mathematics}.
\newblock {\em Risks}, 8(4):136, 2020.

\bibitem{ma1999forward}
Jin Ma, J-M Morel, and Jiongmin Yong.
\newblock {\em Forward-backward stochastic differential equations and their
  applications}.
\newblock Number 1702 in Lecture Notes in Mathematics. Springer Science \&
  Business Media, 1999.

\bibitem{pardoux1998}
{\'E}tienne Pardoux.
\newblock {Backward stochastic differential equations and viscosity solutions
  of systems of semilinear parabolic and elliptic PDEs of second order}.
\newblock In {\em Stochastic Analysis and Related Topics VI}, pages 79--127.
  Springer, 1998.

\bibitem{pardoux1999}
{\'E}tienne Pardoux.
\newblock {BSDEs, weak convergence and homogenization of semilinear PDEs}.
\newblock In {\em Nonlinear analysis, differential equations and control},
  pages 503--549. Springer, 1999.

\bibitem{pardoux1990adapted}
Etienne Pardoux and Shige Peng.
\newblock Adapted solution of a backward stochastic differential equation.
\newblock {\em Systems \& Control Letters}, 14(1):55--61, 1990.

\bibitem{pham}
Huy{\^e}n Pham.
\newblock {\em Continuous-time stochastic control and optimization with
  financial applications}, volume~61.
\newblock Springer Science \& Business Media, 2009.

\bibitem{schmidli}
Hanspeter Schmidli.
\newblock {\em Stochastic Control in Insurance}.
\newblock Springer, London, 2008.

\bibitem{strini}
Josef~Anton Strini and Stefan Thonhauser.
\newblock On a dividend problem with random funding.
\newblock {\em European actuarial journal}, 9(2):607--633, 2019.

\bibitem{touzi}
Nizar Touzi.
\newblock {\em Optimal stochastic control, stochastic target problems, and
  backward SDE}, volume~29.
\newblock Springer Science \& Business Media, 2012.

\bibitem{hailiang}
Jinxia Zhu, Tak~Kuen Siu, and Hailiang Yang.
\newblock Singular dividend optimization for a linear diffusion model with
  time-inconsistent preferences.
\newblock {\em European Journal of Operational Research}, 285(1):66--80, 2020.

\end{thebibliography}
\end{document}